\documentclass[a4paper,12pt]{article}
\usepackage{amsmath, amsfonts, amssymb, amsthm}
\usepackage[english]{babel}

\newcounter{num}[section]

\newenvironment{corollary}
{\refstepcounter{num}%
\bigskip\noindent\nopagebreak[4]{\bf Corollary~\arabic{section}.\arabic{num}. }\it}

\newenvironment{lemma}
{\refstepcounter{num}%
\bigskip\noindent\nopagebreak[4]{\bf Lemma~\arabic{section}.\arabic{num}. }\it}

\newenvironment{remark}
{\refstepcounter{num}%
\bigskip\noindent\nopagebreak[4]{\bf Remark~\arabic{section}.\arabic{num}. }}

\newenvironment{example}
{\refstepcounter{num}%
\bigskip\noindent\nopagebreak[4]{\bf Example~\arabic{section}.\arabic{num}. }}

\newenvironment{theorem}
{\refstepcounter{num}%
\bigskip\noindent\nopagebreak[4]{\bf Theorem~\arabic{section}.\arabic{num}. }\it}

\newcommand{\LL}{{\mathcal{L}}}

\newcommand{\Ss}{{\mathbf{S}}}
\newcommand{\V}{{\mathrm{V}}}

\newcommand{\pr}{{\prime}}

\newcommand{\al}{{\alpha}}

\newcommand{\A}{{\mathcal{A}}}
\newcommand{\M}{{\mathcal{M}}}
\newcommand{\T}{{\mathcal{T}}}
\renewcommand{\P}{{\mathbf{P}}}
\renewcommand{\c}{{\mathbf{c}}}

\newcommand{\Var}{{\mathrm{Var}}}

\sloppy
\oddsidemargin=5mm
\textwidth=16cm
\topmargin=-15mm
\textheight=25cm

\begin{document}

\author{Shevlyakov Artem}
\title{On disjunctions of equations over semigroups}

\maketitle

\abstract{A semigroup $S$ is called an equational domain (e.d.) if any finite union of algebraic sets over $S$ is algebraic. For a semigroup $S$ with a finite two-sided ideal we find necessary and sufficient conditions to be an e.d.}

\section{Introduction}

Following~\cite{uniTh_I,uniTh_II}, one can define the notions of an equation and algebraic set for any algebraic structure $\A$ (group, Lie algebra, semigroup, etc.). It allows us to develop algebraic geometry over every algebraic structure. 

Algebraic sets have the common properties which hold in every algebraic structure $\A$. For example, the intersection of an arbitrary number of algebraic sets is always algebraic in any algebraic structure $\A$.

However the union of algebraic sets is not algebraic in general. In~\cite{uniTh_IV} it was defined the notion of an equational domain (e.d.). An algebraic structure $\A$ is an e.d. if any finite union of algebraic sets over $\A$ is always algebraic. Moreover, in~\cite{uniTh_IV} it was proved necessary and sufficient conditions for any group (Lie algebra, associative ring) to be an e.d. For instance, equational domains in the class of commutative associative rings are exactly the rings with no zero-divisors. 

In~\cite{uniTh_IV} the equational domains in the class of groups were described. By this result, the groups of the next classes are e.d.:
\begin{enumerate}
\item free non-abelian groups (proved by G.~Gurevich, one can see the proof in~\cite{makanin});
\item simple non-abelian groups (it follows from~\cite{rhodes}). 
\end{enumerate}

In~\cite{shevl_ED_I} we found necessary and sufficient conditions for a finite ideal-simple semigroup to be an e.d.

The current paper continues the study of~\cite{shevl_ED_I}, and we investigate the properties of e.d. in the class of semigroups with nontrivial ideals. Sections~\ref{sec:basics},~\ref{sec:al_geom}, contains the definitions of semigroup theory and algebraic geometry. 

In Section~\ref{sec:non_simple_semigroups} we prove the following: if a semigroup $S$ is an e.d. and has a completely simple kernel $K$ then $K$ is also an e.d. (Theorem~\ref{th:main_new}).  

The main result of Section~\ref{sec:action_on_ideal} is Theorem~\ref{th:alpha_sim_beta}. By this theorem, any infinite semigroup with a finite ideal is not an equational domain (Corollary~~\ref{cor:about_infinite_semigroups}).

Finally, in Section~\ref{sec:criterion} we prove the criterion, when a semigroup $S$ with the finite kernel $K$ is an e.d. Precisely, in Theorem~\ref{th:criterion} we prove that necessary conditions of Theorems~\ref{th:main_new},~\ref{th:alpha_sim_beta} are sufficient for such semigroup $S$.

\section{Notions of semigroup theory}
\label{sec:basics}

let us give the classic theorem of semigroup theory

\begin{theorem}
\label{th:sushkevic_rees}
For any completely simple semigroup $S$ there exists a group $G$ and sets $I,\Lambda$ such that  $S$ is isomorphic to the set of triples $(\lambda,g,i)$, $g\in G$, $\lambda\in\Lambda$, $i\in I$ with  multiplication defined by
\[
(\lambda,g,i)(\mu,h,j)=(\lambda,gp_{i\mu}h,j),
\]
where $p_{i\mu}\in G$ is an element of a matrix $\P$ such that
\begin{enumerate}
\item $\P$ consists of  $|I|$ rows and $|\Lambda|$ columns;
\item the elements of the first row and the first column equal  $1\in G$ (i.e. $\P$ is {\it normalized}).
\end{enumerate}
\end{theorem}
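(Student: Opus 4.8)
\medskip
\noindent\textbf{Proof plan.} This is the Rees--Sushkevich structure theorem, and I would prove it through Green's relations. The first task is the structural skeleton of $S$. Simplicity makes $S$ a single $\mathcal{J}$-class, and by Green's theorem (together with the fact that $S$, having a primitive idempotent, is regular) it is then a single $\mathcal{D}$-class, so that every intersection $R_\lambda\cap L_i$ of an $\mathcal{R}$-class with an $\mathcal{L}$-class is non-empty. The presence of the primitive idempotent further forces every $\mathcal{H}$-class to be a group --- equivalently, $eSe=H_e$ for each idempotent $e$ --- and all of these group $\mathcal{H}$-classes are pairwise isomorphic.

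Now coordinatize. List the $\mathcal{R}$-classes as $\{R_\lambda:\lambda\in\Lambda\}$ and the $\mathcal{L}$-classes as $\{L_i:i\in I\}$, so that $S$ is the disjoint union of the group $\mathcal{H}$-classes $H_{\lambda i}=R_\lambda\cap L_i$. Fix an idempotent $e$, index so that $e\in H_{11}$, and set $G=H_{11}$. Pick $r_\lambda\in H_{\lambda 1}$ with $r_1=e$ and $q_i\in H_{1i}$ with $q_1=e$. Two applications of Green's lemma (left translation by $r_\lambda$ is a bijection $H_{1i}\to H_{\lambda i}$, right translation by $q_i$ is a bijection $H_{\lambda 1}\to H_{\lambda i}$) show that each $s\in H_{\lambda i}$ is uniquely of the form $r_\lambda\,g\,q_i$ with $g\in G$; this gives a bijection $S\to\Lambda\times G\times I$ sending such an $s$ to $(\lambda,g,i)$.

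It remains to read off the multiplication. Compute
\[
(r_\lambda g q_i)(r_\mu h q_j)=r_\lambda\,g\,(q_i r_\mu)\,h\,q_j .
\]
Since $q_i\in R_1=R_e$ we have $e q_i=q_i$, and since $r_\mu\in L_1=L_e$ we have $r_\mu e=r_\mu$; hence $q_i r_\mu=e(q_i r_\mu)e\in eSe=G$. Putting $p_{i\mu}:=q_i r_\mu\in G$ --- an array with rows indexed by $I$ and columns by $\Lambda$ --- the displayed product equals $r_\lambda\,(g\,p_{i\mu}\,h)\,q_j$, i.e. $(\lambda,g,i)(\mu,h,j)=(\lambda,\,g\,p_{i\mu}\,h,\,j)$, which is the required formula. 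For the normalization, observe that $p_{11}=q_1r_1=e$, and that replacing the chosen representatives by $p_{i1}^{-1}q_i$ and $r_\mu\,p_{1\mu}^{-1}$ (which only reparametrizes the $G$-coordinate of each element, hence changes nothing essential) turns $p_{i\mu}$ into $p_{i1}^{-1}\,p_{i\mu}\,p_{1\mu}^{-1}$, all of whose first-row entries ($i=1$) and first-column entries ($\mu=1$) equal the identity of $G$. This produces the group $G$, the index sets $I,\Lambda$, and the normalized matrix $\P$.

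The step where I expect the real work to lie is the opening one: extracting from ``$S$ simple with a primitive idempotent'' that $S$ is one $\mathcal{D}$-class and that each $\mathcal{H}$-class is a group (equivalently $eSe=H_e$). After that, the coordinatization is routine Green's-lemma bookkeeping and the normalization is a one-line substitution.
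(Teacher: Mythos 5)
The paper does not actually prove this statement: it is quoted as the classical Rees--Sushkevich representation theorem (``let us give the classic theorem of semigroup theory''), so there is no in-paper argument to compare against. Your proposal is the standard textbook proof (Rees's coordinatization via Green's relations, as in Clifford--Preston or Howie), and the two parts you carry out in detail are correct: the unique factorization $s=r_\lambda g q_i$ obtained from two applications of Green's lemma, the computation $q_i r_\mu = e(q_i r_\mu)e \in eSe = G$ yielding the sandwich multiplication with $p_{i\mu}=q_i r_\mu$, and the renormalization $p_{i\mu}\mapsto p_{i1}^{-1}p_{i\mu}p_{1\mu}^{-1}$ (with $p_{11}=e$), which indeed makes the first row and first column trivial.

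The one place where your sketch is looser than it should be is the opening structural step, and the justification you give there is not quite right: simplicity plus regularity does not by itself force a single $\mathcal{D}$-class --- there exist simple regular semigroups with more than one $\mathcal{D}$-class, so ``Green's theorem together with regularity'' is not a sufficient reason. What is actually needed is the primitivity of the idempotent: from a primitive idempotent $e$ in a simple semigroup one shows that $Se$ is a minimal left ideal and $eS$ a minimal right ideal, whence $eSe=H_e$ is a group, $S$ is a single $\mathcal{D}$-class, and every $\mathcal{H}$-class is a group. You do explicitly flag this opening step as the place ``where the real work lies'' and you state exactly the conclusions that must be extracted from it ($S$ one $\mathcal{D}$-class, $eSe=H_e$), so I read this as a deferred standard lemma rather than a wrong turn; but the parenthetical reason you offer for it would not survive being written out. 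Everything from the coordinatization onward is complete and correct.
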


Following Theorem~\ref{th:sushkevic_rees}, we denote any completely simple semigroup $S$ by $S=(G,\P,\Lambda,I)$. Notice that the cardinality of the set $\Lambda$, ($I$) is equal to the number of minimal right (respectively, left) ideals of a semigroup $S$.

\begin{corollary}
\label{cor:when_is_group}
A completely simple semigroup $S=(G,\P,\Lambda,I)$ is a group iff $|\Lambda|=|I|=1$.
\end{corollary}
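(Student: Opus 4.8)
The plan is to read off both implications directly from the Rees matrix description supplied by Theorem~\ref{th:sushkevic_rees}, writing $S=(G,\P,\Lambda,I)$ as the set of triples $(\lambda,g,i)$ with the stated multiplication.

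For the backward implication, suppose $|\Lambda|=|I|=1$, say $\Lambda=\{\lambda_0\}$ and $I=\{i_0\}$. Since $\P$ is normalized, its single entry is $p_{i_0\lambda_0}=1$, so the multiplication rule collapses to $(\lambda_0,g,i_0)(\lambda_0,h,i_0)=(\lambda_0,gh,i_0)$. Hence the map $(\lambda_0,g,i_0)\mapsto g$ is a bijection of $S$ onto $G$ respecting products, i.e.\ an isomorphism of semigroups, and so $S$ is a group (with identity $(\lambda_0,1,i_0)$).

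For the forward implication, suppose $S$ is a group; I will count its idempotents. A triple $(\lambda,g,i)$ is idempotent iff $(\lambda,gp_{i\lambda}g,i)=(\lambda,g,i)$, i.e.\ iff $gp_{i\lambda}g=g$, which after cancelling $g$ in the group $G$ is equivalent to $g=p_{i\lambda}^{-1}$. Thus for every pair $(\lambda,i)\in\Lambda\times I$ there is exactly one idempotent $e_{\lambda,i}=(\lambda,p_{i\lambda}^{-1},i)$, and distinct pairs $(\lambda,i)$ clearly yield distinct triples. But a group possesses a unique idempotent, namely its identity; therefore $|\Lambda\times I|=1$, which forces $|\Lambda|=|I|=1$.

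I do not expect any real obstacle here: the only two points deserving a line of justification are the equality $p_{i_0\lambda_0}=1$ (immediate from the normalization clause of Theorem~\ref{th:sushkevic_rees}) and the cancellation step (immediate from the group axioms in $G$). As an alternative route for the forward direction, one may instead invoke the remark following Theorem~\ref{th:sushkevic_rees}: a group, regarded as a semigroup, has itself as its only minimal left ideal and only minimal right ideal, so the numbers of such ideals are $|I|=|\Lambda|=1$.
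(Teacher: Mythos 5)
Your proposal is correct. Note that the paper itself gives no proof of this corollary: it is presented as immediate from Theorem~\ref{th:sushkevic_rees} together with the remark just before it, namely that $|\Lambda|$ and $|I|$ equal the numbers of minimal right and left ideals of $S$ --- and a group, having no proper left or right ideals, has exactly one of each. That is exactly the ``alternative route'' you sketch in your last sentence, so that part of your write-up reproduces the paper's (implicit) argument. Your main argument is a genuinely different and more self-contained one: the backward direction by collapsing the Rees multiplication using the normalization $p_{i_0\lambda_0}=1$, and the forward direction by counting idempotents --- each pair $(\lambda,i)$ contributes the unique idempotent $(\lambda,p_{i\lambda}^{-1},i)$, and a group has only one idempotent, forcing $|\Lambda\times I|=1$. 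Both computations are correct (the cancellation $gp_{i\lambda}g=g\Rightarrow g=p_{i\lambda}^{-1}$ is valid in the group $G$, and distinct pairs give distinct triples by comparing outer coordinates). What your route buys is independence from the unproved remark about minimal ideals: everything is verified directly from the multiplication formula of Theorem~\ref{th:sushkevic_rees}. What the paper's route buys is brevity, at the cost of resting on a structural fact it states without proof.
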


The index $\lambda\in\Lambda$, ($i\in I$) of an element $(\lambda,g,i)\in S$ is called {\it the first} (respectively, {\it the second}) index.

\medskip

The minimal ideal (if it exists) of a semigroup $S$ is called the \textit{kernel} and denoted by $Ker(S)$. Clearly, any finite semigroup has the kernel.  Obviously, if $S=Ker(S)$ the semigroup is simple. If $Ker(S)$ is a group then $S$ is said to be a \textit{homogroup}.

Below we consider only semigroups with completely simple kernel $K=(G,\P,\Lambda,I)$. For example, any finite semigroup has the finite kernel of the form $K=(G,\P,\Lambda,I)$.

\section{Notions of algebraic geometry}
\label{sec:al_geom}

All definitions below are deduced from the general notions of~\cite{uniTh_I,uniTh_II}, where the definitions of algebraic geometry were formulated for an arbitrary algebraic structure in the language with no predicates.

Semigroups as algebraic structures are often considered in the language $\LL_0=\{\cdot\}$. However, for a given semigroup $S$ one can add to the language $\LL_0$ the set of constants $\{s|s\in S\}$. We denote the extended language by $\LL_S$, and below we consider all semigroups in such language.

Let $X$ be a finite set of variables  $x_1,x_2,\ldots,x_n$. \textit{An $\LL_S$-term} in variables  $X$ is a finite product of variables and constants $s\in S$. For example, the following expressions $xsy^2x$, $xs_1ys_2x^2$, $x^2yxz$ ($s,s_1,s_2\in S$) are $\LL_S$-terms.

{\it An equation} over $\LL_S$ is an equality of two $\LL_S$-terms $t(X)=s(X)$. {\it A system of equations} over $\LL_S$ ({\it a system} for shortness) is an arbitrary set of equations over $\LL_S$.
 
A point $P=(p_1,p_2,\ldots,p_n)\in S^n$ is a \textit{solution} of a system $\Ss$ in variables $x_1,x_2,\ldots,x_n$, if the substitution $x_i=p_i$ reduces any equation of $\Ss$ to a true equality in the semigroup $S$. The set of all solutions of a system $\Ss$ in a semigroup $S$ is denoted by $\V_S(\Ss)$. A set $Y\subseteq S^n$ is called  {\it algebraic} over the language $\LL_S$ if there exists a system over $\LL_S$ in variables $x_1,x_2,\ldots,x_n$ with the solution set $Y$. 

Following~\cite{uniTh_IV}, let us give the main definition of our paper.

A semigroup $S$ is an {\it equational domain} ({\it e.d.} for shortness) in the language $\LL_S$ if for all algebraic sets $Y_1,Y_2,\ldots,Y_n$ the union $Y=Y_1\cup Y_2\cup\ldots\cup Y_n$ is algebraic. 

The next theorem contains necessary and sufficient conditions for a semigroup to be an e.d.  

\begin{theorem}\textup{\cite{uniTh_IV}}
\label{th:about_M}
A semigroup $S$ in the language $\LL_S$ is an e.d. iff the set 
\[
\M_{sem}=\{(x_1,x_2,x_3,x_4)|x_1=x_2\mbox{ or }x_3=x_4\}\subseteq S^4
\]
is algebraic, i.e. there exists a system $\Ss$ in variables $x_1,x_2,x_3,x_4$ with the solution set $\M_{sem}$.
\end{theorem}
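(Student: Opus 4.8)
The plan is to prove the two implications of the equivalence separately, with essentially all of the work concentrated in the direction ``$\M_{sem}$ algebraic $\Rightarrow$ $S$ is an e.d.''. The easy direction is almost immediate: if $S$ is an e.d., then since
\[
\M_{sem}=\{(x_1,x_2,x_3,x_4):x_1=x_2\}\ \cup\ \{(x_1,x_2,x_3,x_4):x_3=x_4\},
\]
and the two sets on the right are algebraic over $\LL_S$, being the solution sets of the one-equation systems $\{x_1=x_2\}$ and $\{x_3=x_4\}$ in the four variables $x_1,x_2,x_3,x_4$, their union $\M_{sem}$ is algebraic by the definition of an e.d.

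For the substantive direction I would fix a system $\Ss_\M$ in variables $x_1,x_2,x_3,x_4$ with $\V_S(\Ss_\M)=\M_{sem}$, and first reduce, by induction on the number of sets, to showing that $Y_1\cup Y_2$ is algebraic whenever $Y_1=\V_S(\Ss_1)$ and $Y_2=\V_S(\Ss_2)$ are algebraic subsets of $S^n$, with $\Ss_1,\Ss_2$ systems in variables $z_1,\ldots,z_n$. The central device is a substitution lemma: for any $\LL_S$-terms $t_1,t_2,t_3,t_4$ in $z_1,\ldots,z_n$, replacing each $x_k$ by $t_k$ throughout $\Ss_\M$ produces a system $\Ss_\M^{(t_1,t_2,t_3,t_4)}$ in the variables $z_1,\ldots,z_n$ whose solution set is exactly $\{Q\in S^n:t_1(Q)=t_2(Q)\text{ or }t_3(Q)=t_4(Q)\}$. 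This is immediate from $\V_S(\Ss_\M)=\M_{sem}$ together with the fact that evaluation commutes with substitution of terms: $Q$ solves $\Ss_\M^{(t_1,t_2,t_3,t_4)}$ iff the tuple $(t_1(Q),t_2(Q),t_3(Q),t_4(Q))$ solves $\Ss_\M$, i.e.\ lies in $\M_{sem}$. Hence every ``single disjunction'' of two equations, in any number of variables, cuts out an algebraic set.

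Next I would combine these disjunctions via the propositional identity $(\forall i\,a_i)\vee(\forall j\,b_j)\Leftrightarrow\forall i\,\forall j\,(a_i\vee b_j)$. Writing each equation of $\Ss_1$ as $u=v$ and each equation of $\Ss_2$ as $u'=v'$, this yields
\[
Y_1\cup Y_2=\bigcap_{(u=v)\in\Ss_1}\ \bigcap_{(u'=v')\in\Ss_2}\{Q\in S^n:u(Q)=v(Q)\text{ or }u'(Q)=v'(Q)\},
\]
an intersection of algebraic sets, which is algebraic since an arbitrary intersection of algebraic sets is algebraic (recalled in the introduction). The degenerate cases, where $\Ss_1$ or $\Ss_2$ is empty and hence $Y_1$ or $Y_2$ equals $S^n$, are handled separately and trivially. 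This closes the induction and shows $S$ is an e.d.

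I do not anticipate a serious obstacle in this argument; the step demanding the most care is the substitution bookkeeping --- making sure $\Ss_\M$ really involves only the variables $x_1,x_2,x_3,x_4$, so that $\Ss_\M^{(t_1,t_2,t_3,t_4)}$ lives in the correct variable set $z_1,\ldots,z_n$ and introduces no spurious variables --- together with a careful statement and verification of the propositional distributivity identity, including its behaviour when one of the index sets (systems) is empty.
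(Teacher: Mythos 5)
Your proposal is correct: the easy direction is exactly the definition of an e.\,d.\ applied to the two coordinate-equality sets, and the substantive direction (substitute terms into the system defining $\M_{sem}$ to make any single disjunction of two equations algebraic, then distribute a union of two solution sets into an intersection of such disjunction sets via $(\forall i\,a_i)\vee(\forall j\,b_j)\Leftrightarrow\forall i\,\forall j\,(a_i\vee b_j)$, and induct on the number of sets) is the standard argument. The paper itself states this theorem as a citation to the reference on equational domains and gives no proof, so there is nothing to contrast your argument with; it faithfully reconstructs the argument of the cited source, including the only delicate points (evaluation commuting with term substitution, and the degenerate cases of empty or inconsistent systems), which you handle correctly.
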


Below we will study equations over groups, therefore we have to give some definitions of algebraic geometry over groups. Any group $G$ below will be considered in the language $\LL_G=\{\cdot,^{-1},1\}\cup\{g|g\in G\}$. \textit{An $\LL_G$-term}  in variables  $X=\{x_1,x_2,\ldots,x_n\}$ is a finite product of variables in integer powers and constants $g\in G$. In other words, an $\LL_G$-term is an element of the free product $F(X)\ast G$, where $F(X)$ is a free group generated by the set $X$. 

The definitions of equations, algebraic sets and equational domains over groups are given in the same way as it is over semigroups. 

For groups of the language $\LL_G$ we have the following result.

\begin{theorem}\textup{\cite{uniTh_IV}}
A group $G$ of the language $\LL_G$ is an e.d. iff the set 
\label{th:criterion_for_groups}
\begin{equation*}
\M_{gr}=\{(x_1,x_2)|x_1=1\mbox{ or }x_2=1\}\subseteq G^2
\end{equation*}
is algebraic, i.e. there exists a system $\Ss$ in variables $x_1,x_2$ with the solution set $\M_{gr}$.
\end{theorem}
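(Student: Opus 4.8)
\emph{Proof idea.} The statement is the group analogue of Theorem~\ref{th:about_M}, and I would prove it along the same lines, using the group structure to shrink the ``separating set'' from four coordinates to two. For \emph{necessity} ($G$ an e.d.\ $\Rightarrow$ $\M_{gr}$ algebraic) I would simply observe that in the variables $x_1,x_2$ the sets $\{(x_1,x_2)\mid x_1=1\}$ and $\{(x_1,x_2)\mid x_2=1\}$ are algebraic --- they are the solution sets of the one-equation systems $\{x_1=1\}$ and $\{x_2=1\}$ --- so their union, which is exactly $\M_{gr}$, is algebraic whenever $G$ is an e.d.

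The substantive direction is \emph{sufficiency}. Assuming $\M_{gr}=\V_G(\Ss)$ for some system $\Ss$ in $x_1,x_2$, I would first reduce to unions of two algebraic sets (since $Y_1\cup\dots\cup Y_n=(Y_1\cup\dots\cup Y_{n-1})\cup Y_n$, induction on $n$ then does the rest). So let $Y_1,Y_2\subseteq G^m$ be algebraic. The first step is the group-specific normalisation: every equation $t(X)=s(X)$ is equivalent to $t(X)s(X)^{-1}=1$, so each $Y_k$ can be written as $\V_G\big(\{w=1\mid w\in W_k\}\big)$ with $W_k\subseteq F(X)\ast G$, where $X=\{x_1,\dots,x_m\}$. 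The key step is then the elementary identity
\[
Y_1\cup Y_2=\bigcap_{w\in W_1,\ v\in W_2}\{P\in G^m\mid w(P)=1\ \text{or}\ v(P)=1\},
\]
whose nontrivial inclusion is: if $P\notin Y_1$, then $w(P)\ne 1$ for some $w\in W_1$, so $v(P)=1$ must hold for every $v\in W_2$, i.e.\ $P\in Y_2$.

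To finish I would show each $Z_{w,v}=\{P\mid w(P)=1\ \text{or}\ v(P)=1\}$ is algebraic, and this is exactly where the hypothesis on $\M_{gr}$ is used: substituting $x_1\mapsto w$, $x_2\mapsto v$ into every equation of $\Ss$ --- i.e.\ applying the homomorphism $F(x_1,x_2)\ast G\to F(X)\ast G$ that is the identity on $G$ and sends $x_1\mapsto w$, $x_2\mapsto v$ --- produces a system $\Ss_{w,v}$ in the variables $X$ whose solution set is the preimage of $\M_{gr}$ under $P\mapsto(w(P),v(P))$, namely $Z_{w,v}$. The union of the systems $\Ss_{w,v}$ over all pairs $(w,v)\in W_1\times W_2$ is again one system in $X$, and a union of systems has solution set equal to the intersection of the individual solution sets; by the displayed identity this is $Y_1\cup Y_2$, which is therefore algebraic.

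I do not expect a genuine obstacle here. The only step that uses anything beyond formal manipulation is the reduction of equations to the form $w=1$, which relies on inverses and explains why two coordinates suffice --- over a bare semigroup this normalisation fails, which is precisely why Theorem~\ref{th:about_M} is forced to use the four-variable set $\M_{sem}$. The one routine point to verify is that the index sets $W_1,W_2$, and hence the index set of the final union of systems, may be infinite; this causes no difficulty since a system is by definition an arbitrary set of equations.
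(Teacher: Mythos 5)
Your argument is correct and complete: the necessity direction is the trivial observation that $\M_{gr}$ is a union of two algebraic sets, and the sufficiency direction (normalising equations to the form $w=1$ via inverses, writing $Y_1\cup Y_2$ as an intersection of the sets $Z_{w,v}$, and realising each $Z_{w,v}$ as the preimage of $\M_{gr}$ under substitution of terms for $x_1,x_2$) is exactly the standard argument. Note that the paper itself gives no proof of this statement --- it is quoted from the reference \cite{uniTh_IV} --- and your proof is essentially the one found there, so there is nothing to compare beyond saying it matches the intended route.
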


One can reformulate Theorem~\ref{th:criterion_for_groups} in simpler form using the next definition. An element $x\neq 1$ of a group $G$ is a \textit{zero-divisor} if there exists $1\neq y\in G$ such that for any $g\in G$ it holds $[x,y^g]=1$ (here $y^g=gyg^{-1}$, $[a,b]=a^{-1}b^{-1}ab$).

\begin{theorem}\textup{\cite{uniTh_IV}}
\label{th:zero_divisors}
A group $G$ in the language $\LL_G$ is an e.d. iff it does not contain zero-divisors.
\end{theorem}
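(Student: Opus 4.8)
The plan is to prove Theorem~\ref{th:zero_divisors} by showing that the non-existence of zero-divisors in $G$ is equivalent to the algebraicity of the set $\M_{gr}=\{(x_1,x_2)\mid x_1=1\text{ or }x_2=1\}\subseteq G^2$, and then invoke Theorem~\ref{th:criterion_for_groups}. So the entire task reduces to characterizing when $\M_{gr}$ is algebraic in purely group-theoretic terms.

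First I would analyze what a system $\Ss$ in two variables $x_1,x_2$ with $\V_G(\Ss)=\M_{gr}$ must look like. Each equation of such a system is an identity $w(x_1,x_2)=1$ with $w\in F(x_1,x_2)\ast G$. The solution set of a single equation, and hence of the whole system (which is the intersection), must contain all pairs with $x_1=1$ and all pairs with $x_2=1$, but no pair with both $x_i\neq 1$. The key observation is that if $x_1=1$ is substituted, every equation $w(1,x_2)=1$ must hold for \emph{all} $x_2\in G$; this forces $w(1,x_2)$ to be trivial in $F(x_2)\ast G$, i.e. $w$ lies in the normal closure of $x_1$ in a suitable sense. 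Symmetrically for $x_2=1$. I expect the clean way to package this is: $\M_{gr}$ is algebraic iff there is a single word (or finite set of words) $w(x_1,x_2)$ that vanishes exactly on $\M_{gr}$, and such a word can be taken in the form built from commutators $[x_1,x_2^g]$ for $g\in G$ — this is precisely where the definition of zero-divisor comes from.

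The two directions then go as follows. For the contrapositive of ``e.d. $\Rightarrow$ no zero-divisors'': if $x$ is a zero-divisor witnessed by $y$, then consider the conjugates and observe that the element $[x,y^g]=1$ for all $g$ means $x$ and $y$ behave like a ``disjunction obstruction'' — more carefully, one builds from a hypothetical system for $\M_{gr}$ a contradiction by substituting $x_1=x$, $x_2=y$ (suitably conjugated) and using that these are nontrivial yet satisfy all the commutator-type constraints the system is forced to impose, so $(x,y)$ would lie in $\V_G(\Ss)$ despite both coordinates being nontrivial. Conversely, if $G$ has no zero-divisors, I would exhibit an explicit system: the natural candidate is $\Ss=\{[x_1,x_2^g]=1 : g\in G\}$, possibly together with finitely many more equations, or rather argue that for any nontrivial $x_1=a$ the set of $x_2$ satisfying all $[a,x_2^g]=1$ is exactly $\{1\}$ precisely because $a$ is not a zero-divisor. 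One must also check the reverse containments ($x_1=1$ or $x_2=1$ really do satisfy every equation), which is immediate for commutator equations.

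The main obstacle I anticipate is the direction ``no zero-divisors $\Rightarrow$ $\M_{gr}$ algebraic'': the commutator system $\{[x_1,x_2^g]=1:g\in G\}$ has solution set $\{(a,b): [a,b^g]=1\ \forall g\}$, and one needs that this set equals $\M_{gr}$ exactly — the inclusion $\M_{gr}\subseteq$ solutions is clear, but ``no zero-divisors'' must be shown to give the reverse inclusion, namely that $a\neq 1$ and $b\neq 1$ together with $[a,b^g]=1$ for all $g$ is impossible, which is literally the negation of ``$a$ is a zero-divisor.'' So this direction is essentially a restatement once the right system is chosen; the real content is recognizing that Theorem~\ref{th:criterion_for_groups}'s abstract ``there exists a system'' can, without loss, be replaced by this canonical commutator system — i.e. that any system defining $\M_{gr}$ can be massaged into implying the commutator relations, or that no defining system can exist unless the commutator one already works. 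I would handle this by the substitution argument sketched above: plugging $x_1=1$ into an arbitrary defining system shows each word becomes trivial as an element of $F(x_2)\ast G$, which constrains the exponent sum and structure of $w$ enough to extract the needed conclusion.
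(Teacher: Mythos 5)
The paper itself gives no proof of this theorem --- it is quoted from \cite{uniTh_IV} --- so your attempt can only be judged on its own terms. One direction of your plan is complete and correct: if $G$ has no zero-divisors, the system $\{[x_1,x_2^g]=1 : g\in G\}$ has solution set exactly $\M_{gr}$ (the inclusion $\M_{gr}\subseteq\V_G(\Ss)$ is trivial, and a solution with both coordinates nontrivial would be, by definition, a zero-divisor pair), so $\M_{gr}$ is algebraic and Theorem~\ref{th:criterion_for_groups} applies.

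The gap is in the other direction. You must show that if $x$ is a zero-divisor witnessed by $y$, then the pair $(x,y)$ satisfies \emph{every} equation $w(x_1,x_2)=1$ whose solution set contains $\M_{gr}$ --- not merely the commutator equations. Your proposed mechanism for this, namely that substituting $x_1=1$ forces $w(1,x_2)$ to be trivial as an element of $F(x_2)\ast G$, is false: $w(1,x_2)$ only has to vanish under every substitution $x_2=b$, which is far weaker than triviality in the free product (e.g.\ $w=x_2^{n}$ in a group of exponent $n$). The argument that actually closes this direction is different. From $[x,y^g]=1$ for all $g$ one gets that $x$ centralizes the normal closure $N$ of $y$; since $C_G(N)$ is normal, the whole normal closure $M$ of $x$ centralizes $N$, i.e.\ $[M,N]=1$. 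Writing $w=c_0z_1c_1\cdots z_nc_n$ with $z_j\in\{x_1^{\pm1},x_2^{\pm1}\}$ and conjugating each letter by the prefix of constants, one gets $w(x,y)=\bigl(\prod_j u_jv_ju_j^{-1}\bigr)w(1,1)$ where $v_j$ is the value of $z_j$ and each $u_j\in G$; every factor lies in $M$ or in $N$, so the product can be sorted, giving the identity $w(x,y)=w(x,1)\,w(1,1)^{-1}\,w(1,y)$. Since $(x,1),(1,y),(1,1)\in\M_{gr}$, all three right-hand factors equal $1$, hence $w(x,y)=1$ and $(x,y)\in\V_G(\Ss)\setminus\M_{gr}$, a contradiction. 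Without this factorization lemma (or an equivalent), your sketch of the ``only if'' direction does not go through.
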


\bigskip

Let us give results of~\cite{shevl_ED_I}, where we studied equational domains in the class of finite simple semigroups.

The matrix $\P$ of a semigroup $S=(G,\P,\Lambda,I)$ is {\it non-singular} if it does not contain two equal rows or columns. The non-singularity of $\P$ is equivalent to the reductivity of the semigroup $S$. 

\begin{lemma}\textup{(Lemma~3.4. of~\cite{shevl_ED_I})}
\label{l:exists_2_non_dist_elems}
Suppose the matrix $\P$ of a finite simple semigroup $S=(G,\P,\Lambda,I)$ has equal rows (columns) with indexes $i,j$ (respectively, $\lambda,\mu$). Then for the elements $s_1=(1,1,i)$, $s_2=(1,1,j)$ (respectively, $s_1=(\lambda,1,1)$, $s_2=(\mu,1,1)$) and for an arbitrary $\LL_S$-term $t(x)$ one of the following conditions holds:
\begin{enumerate}
\item $t(s_1)=t(s_2)$;
\item $t(s_1)=(\nu,g,i)$, $t(s_2)=(\nu,g,j)$ for some $g\in G$, $\nu\in\Lambda$ if $t(x)$ ends with a variable $x$ (respectively, $t(s_1)=(\lambda,g,k)$, $t(s_2)=(\mu,g,k)$ for some $g\in G$, $k\in I$ if $t(x)$ begins with $x$). 
\end{enumerate}
\end{lemma}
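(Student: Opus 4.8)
The plan is to argue by induction on the \emph{length} of the term $t(x)$, i.e. the number of variable- and constant-symbols in the product; nothing here uses finiteness of $S$, only the Rees description of $K=S$ from Theorem~\ref{th:sushkevic_rees}. The two situations — equal rows $i,j$ and equal columns $\lambda,\mu$ — are handled by dual arguments, so I describe the row case and then indicate that the column case follows by reflecting left and right.

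The first point to record is how indices behave under the Rees multiplication $(\lambda,g,i)(\mu,h,j)=(\lambda,gp_{i\mu}h,j)$: the first index of a product is that of the left-most factor, the second index is that of the right-most factor, and only the group coordinate absorbs the ``middle'' data. Hence, for any term $t(x)$ and any $s\in S$, the first index of $t(s)$ is either a fixed index supplied by a leading constant of $t$, or (if $t$ begins with $x$) the first index of $s$; since $s_1=(1,1,i)$ and $s_2=(1,1,j)$ share the first index $1$, the first coordinates of $t(s_1)$ and $t(s_2)$ \emph{always} coincide. Similarly the second coordinate of $t(s)$ is supplied by a trailing constant of $t$ when $t$ ends in a constant — and then the second coordinates of $t(s_1),t(s_2)$ agree too — whereas it equals the second index of $s$ when $t$ ends in $x$, giving $i$ and $j$ respectively. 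So the entire content of the statement concerns the group coordinate, and the hypothesis $p_{i\nu}=p_{j\nu}$ for all $\nu\in\Lambda$ (rows $i$ and $j$ of $\P$ coincide) is precisely what controls it.

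For the induction: if $t(x)$ contains no variable, alternative (1) is trivial. Otherwise write $t(x)=t'(x)\cdot a$ with $a$ the last symbol; if $t'$ is empty then $t(x)=x$, giving $t(s_1)=(1,1,i)$, $t(s_2)=(1,1,j)$, i.e. alternative (2). If $a=(\xi,h,\ell)$ is a constant, apply the induction hypothesis to $t'$: either $t'(s_1)=t'(s_2)$, whence $t(s_1)=t(s_2)$ at once; or $t'$ ends in $x$ with $t'(s_1)=(\nu,g,i)$, $t'(s_2)=(\nu,g,j)$, whence $t(s_1)=(\nu,gp_{i\xi}h,\ell)$ and $t(s_2)=(\nu,gp_{j\xi}h,\ell)$, equal because $p_{i\xi}=p_{j\xi}$ — so alternative (1) holds, consistently with $t$ ending in a constant. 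If $a=x$, apply the induction hypothesis to $t'$: in every branch $t'(s_1)$ and $t'(s_2)$ share a first index $\nu$ and group coordinate $g$ (they either coincide or differ only in the second coordinate), with second indices $\ell_1,\ell_2$ satisfying either $\ell_1=\ell_2$ or $\{\ell_1,\ell_2\}=\{i,j\}$; right-multiplying by $s_1=(1,1,i)$, resp.\ $s_2=(1,1,j)$, yields $t(s_1)=(\nu,gp_{\ell_1 1},i)$ and $t(s_2)=(\nu,gp_{\ell_2 1},j)$, and the group coordinates agree (trivially if $\ell_1=\ell_2$; by equality of rows $i,j$ — in fact both equal $g$, since $\P$ is normalized — if $\{\ell_1,\ell_2\}=\{i,j\}$), which is alternative (2). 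The column case is the mirror image: peel off the \emph{first} symbol of $t(x)$, and use $p_{i\lambda}=p_{i\mu}$ for all $i\in I$ exactly where a factor of first index $\lambda$ or $\mu$ is multiplied on the left and that index is absorbed.

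I do not expect any step to be genuinely hard; the ``main obstacle'' is purely organizational — keeping track of whether the relevant end of the term is a variable or a constant so as to land in the correct alternative, dealing with the degenerate terms (a lone variable, or a product of constants), and invoking the equal-rows (equal-columns) identity at the single place where it is needed.
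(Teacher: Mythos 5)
Your argument is correct: the induction on the length of $t(x)$, peeling off the last (respectively first) symbol and invoking the equal-rows identity $p_{i\xi}=p_{j\xi}$ exactly when a constant is appended after an occurrence of $x$, together with the normalization $p_{\ell 1}=1$ in the step where $x$ itself is appended, establishes both alternatives and correctly ties alternative (2) to terms ending (beginning) with $x$. Note that the paper does not prove this lemma at all --- it is imported verbatim as Lemma~3.4 of~\cite{shevl_ED_I} --- so your proof is a self-contained reconstruction rather than a variant of an argument given here; it is the natural one and I see no gap.
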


\begin{theorem}\textup{(Theorem~3.1. of~\cite{shevl_ED_I})}
\label{th:main}
A finite completely simple semigroup $S=(G,\P,\Lambda,I)$ is an e.d. in the language $\LL_S$ iff the following two conditions hold:
\begin{enumerate}
\item $\P$ is nonsingular; 
\item $G$ is an e.d. in the group language $\LL_G$. 
\end{enumerate}
\end{theorem}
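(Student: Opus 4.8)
The plan is to reduce, via Theorem~\ref{th:about_M}, to showing that the set $\M_{sem}\subseteq S^{4}$ is algebraic over $\LL_{S}$ if and only if $\P$ is nonsingular and $G$ is an e.d.\ over $\LL_{G}$. Two devices will be used throughout. The first is the \emph{corner group} $G'=\{(1,g,1):g\in G\}$, a subsemigroup of $S$ isomorphic to $G$ with identity $e=(1,1,1)$. The second is the family of elements $s_{m}=(1,1,m)$ and $s'_{\mu}=(\mu,1,1)$ together with the elementary computation $s_{m}(\lambda,g,k)s'_{\mu}=(1,\,p_{m\lambda}\,g\,p_{k\mu},\,1)$, which shows that every $\LL_{S}$-term of the form $s_{m}xs'_{\mu}$ takes values in $G'$, for any argument from $S$.

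\textbf{Sufficiency.} Assume $\P$ is nonsingular and $G$ is an e.d. I would first prove the auxiliary equivalence: for $a,b\in S$, one has $a=b$ iff $s_{m}as'_{\mu}=s_{m}bs'_{\mu}$ for all $m\in I$, $\mu\in\Lambda$. Only the converse needs an argument: taking $m=\mu=1$ forces the middle coordinates to agree, after which varying $\mu$ (respectively $m$) and using that a nonsingular $\P$ has no two equal columns (respectively rows) forces the first (respectively second) indices to agree. Consequently $\{x_{1}=x_{2}\}$ and $\{x_{3}=x_{4}\}$ are finite intersections of sets of the form $\{s_{m}xs'_{\mu}=s_{m}x's'_{\mu}\}$, so distributing $\cup$ over $\cap$ gives
\[
\M_{sem}=\bigcap_{m,\mu,m',\mu'}\Bigl(\{s_{m}x_{1}s'_{\mu}=s_{m}x_{2}s'_{\mu}\}\ \cup\ \{s_{m'}x_{3}s'_{\mu'}=s_{m'}x_{4}s'_{\mu'}\}\Bigr),
\]
a finite intersection. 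Since intersections of algebraic sets are algebraic, it suffices to realise each set $\{u_{1}=u_{2}\}\cup\{v_{1}=v_{2}\}$ as algebraic, where $u_{1},u_{2},v_{1},v_{2}$ are $G'$-valued $\LL_{S}$-terms. In the finite group $G'$ one has $u_{1}=u_{2}$ iff $u_{1}u_{2}^{N}=e$ with $N=|G|-1$, so, writing $w_{1}=u_{1}u_{2}^{N}$ and $w_{2}=v_{1}v_{2}^{N}$ (still $G'$-valued terms), the set becomes $\{w_{1}=e\}\cup\{w_{2}=e\}$. Now take an $\LL_{G}$-system defining $\M_{gr}$ — it exists by Theorem~\ref{th:criterion_for_groups} because $G$ is an e.d. — transport it along $G\cong G'$, replace each inverse by an $N$-th power, and substitute $y_{1}\mapsto w_{1}$, $y_{2}\mapsto w_{2}$. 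Because $w_{1},w_{2}$ are $G'$-valued and $G'$ is closed under multiplication, the resulting $\LL_{S}$-system has solution set exactly $\{w_{1}=e\}\cup\{w_{2}=e\}$; hence $\M_{sem}$ is algebraic and $S$ is an e.d.

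\textbf{Necessity.} Suppose $S$ is an e.d., so $\M_{sem}=\V_S(\Ss)$ for some system $\Ss$. For nonsingularity I argue by contraposition. Assume $\P$ has two equal rows with indices $i\ne j$ (equal columns are the left--right mirror image), and set $s_{1}=(1,1,i)$, $s_{2}=(1,1,j)$; these are idempotents that differ only in the second index. The multi-variable refinement of Lemma~\ref{l:exists_2_non_dist_elems} — proved by the same Rees-coordinate computation, since switching some occurrences of $s_{1}$ and $s_{2}$ inside a product alters neither the middle coordinates nor any of the bridging entries of $\P$ (their subscripts lie in the two equal rows or equal the normalized index $1$) — shows that for $P=(s_{1},s_{2},s_{1},s_{2})$ and any point $Q$ obtained from $P$ by switching some coordinates between $s_{1}$ and $s_{2}$, each term $t$ satisfies $t(P)=t(Q)$ except possibly in the second index, which equals the second index of whatever is substituted for the last symbol of $t$. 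Since $P\notin\M_{sem}$, some equation $t=t'$ of $\Ss$ fails at $P$; a short inspection of the last symbols of $t$ and $t'$ produces a point $Q$ among the $\{s_{1},s_{2}\}$-valued points of $\M_{sem}$ — for instance one of $(s_{1},s_{1},s_{1},s_{2})$, $(s_{2},s_{2},s_{1},s_{2})$, $(s_{1},s_{2},s_{1},s_{1})$, $(s_{1},s_{2},s_{2},s_{2})$ — at which $t=t'$ still fails, contradicting $Q\in\V_S(\Ss)$. For ``$G$ is an e.d.'', substitute $x_{2}=e$ and $x_{4}=e$ in $\Ss$ and adjoin the equations $ex_{1}e=x_{1}$, $ex_{3}e=x_{3}$; the resulting system over $S$ has solution set $\{(a,b)\in(G')^{2}:a=e\ \text{or}\ b=e\}$, a copy of $\M_{gr}$. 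On arguments from $G'$, every $\LL_{S}$-equation (after merging consecutive constants) reduces to fixed first/second-index conditions — necessarily consistent, since $(e,e)$ is a solution — together with a genuine $\LL_{G}$-equation on the middle coordinates (all remaining bridging factors being $1$ by normalization). These middle equations form an $\LL_{G}$-system defining $\M_{gr}$, so $G$ is an e.d.

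\textbf{Expected obstacle.} The genuinely delicate part is the necessity of non-singularity. It requires both the multi-variable strengthening of Lemma~\ref{l:exists_2_non_dist_elems} and a short but case-heavy argument — organised by whether each side of the failing equation ends in a constant or in one of the variables $x_{1},\dots,x_{4}$, and in the latter case by which variable — to exhibit a perturbation of $P$ lying in $\M_{sem}$ that still violates the equation. Everything else is routine Rees-coordinate bookkeeping, together with the distributivity trick that turns the disjunction ``$x_{1}=x_{2}$ or $x_{3}=x_{4}$'' into a conjunction of group-type disjunctions handled by the hypothesis that $G$ is an e.d.
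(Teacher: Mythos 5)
Your proof is correct, but note that the paper never proves Theorem~\ref{th:main} itself: it is imported from~\cite{shevl_ED_I} (Theorem~3.1 there), so the only basis for comparison is the analogous machinery this paper develops for semigroups with a completely simple kernel. Against that, your necessity half is essentially the paper's own argument specialized to $S=K$: the reduction of an $\LL_S$-system restricted to $\Gamma^2$ to an $\LL_G$-system on middle coordinates is the content of Lemma~\ref{l:about_equiv_over_Gamma} and is deployed exactly as in Lemma~\ref{l:S-eq_dom->G-eq_dom_new}, and your perturbation of the point $(s_1,s_2,s_1,s_2)$ when two rows of $\P$ coincide is the same device as Lemma~\ref{l:singular->not_ED_new}, which rests on Lemma~\ref{l:exists_2_non_dist_elems}; your multi-variable refinement of that lemma is true (swapping $s_1$ and $s_2$ leaves every bridging entry $p_{k\lambda}$ and every first/middle coordinate unchanged, and can only move the terminal index), and your case split on the last symbols of the two sides does close all cases with the points you list. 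The sufficiency half is where you genuinely diverge. The paper's route (Lemmas~\ref{l:exists_dist_term_new} and~\ref{l:sufficient_conditions_new} in Section~\ref{sec:criterion}) builds, for each $P\notin\M_{sem}$, a single $\Gamma$-valued term $t_P$ vanishing at every other point, by induction with the commutator $t^{-1}s^{-1}ts$ and the zero-divisor criterion of Theorem~\ref{th:zero_divisors}; you instead use nonsingularity to recover equality in $S$ from the finitely many conjugates $s_m x s'_\mu$, distribute union over intersection to reduce $\M_{sem}$ to two-term disjunctions of $\Gamma$-valued equations, and substitute the $\Gamma$-valued terms $w_1,w_2$ directly into a group system defining $\M_{gr}$, which exists by Theorem~\ref{th:criterion_for_groups}. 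Both work: your substitution trick is shorter and avoids the induction over point sets, while the paper's point-separating terms are the form that survives the generalization to non-simple $S$ in Theorem~\ref{th:criterion}, where points outside the kernel must also be separated via the triviality of $\sim_K$. The only passages that lean on the reader are the ``multi-variable refinement'' and the ``short inspection of last symbols''; both check out, but in a final write-up they deserve the explicit computation and case list.
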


Remark that the ``if'' statement of Theorem~\ref{th:main} does not hold for infinite completely simple semigroups. 

\begin{example}\textup{(Example 4.11. of~\cite{shevl_ED_I})}
\label{ex:domain_240}
Define a finite simple semigroup $S_{240}=(A_5,\P,\{1,2\},\{1,2\})$, where $A_5$ is the alternating group of degree $5$, 
\[\P=\begin{pmatrix}1&1\\1&g\end{pmatrix},\]
and $g\neq 1$. By Theorem~\ref{th:main}, $S_{240}$ is an e.d. and $|S_{240}|=|A_5|\cdot 2\cdot 2=240$.
\end{example}

\begin{corollary}\textup{(Corollary~5.3. of~\cite{shevl_ED_I})}
\label{cor:about_homogroups}
If a homogroup $S$ is an e.d. then $S$ is a group, and $S=Ker(S)$.
\end{corollary}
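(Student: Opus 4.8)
The plan is to prove the contrapositive: if a homogroup $S$ is not a group (equivalently $S\neq Ker(S)$) then $S$ is not an e.d. Write $K=Ker(S)$ and let $e$ be the identity of the group $K$. The first step is to record standard facts about $e$: since $K$ is an ideal, $es,se\in K$ for all $s\in S$, and from $(es)e=es$ and $e(se)=se$ we get $es=ese=se$, so $e$ is a central idempotent of $S$; moreover $eS=Se=K$, so that for $x\in S$ one has $ex=x$ if and only if $x\in K$. Then I fix $a\in S\setminus K$ and put $b:=ea\in K$; since $a\notin K$, we have $a\neq b$.

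Now suppose $S$ is an e.d. By Theorem~\ref{th:about_M} there is a system $\Ss$ in $x_1,x_2,x_3,x_4$ with $\V_S(\Ss)=\M_{sem}$. Consider the point $P=(a,b,a,b)$; since $a\neq b$, this point is not in $\M_{sem}$. It therefore suffices to show that \emph{every} equation $t(x_1,\dots,x_4)=s(x_1,\dots,x_4)$ holding on all of $\M_{sem}$ also holds at $P$: applied to the equations of $\Ss$, this forces $P\in\V_S(\Ss)=\M_{sem}$, a contradiction, whence $S=K$ is a group.

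The engine of the argument is a substitution identity coming from centrality and idempotency of $e$ together with $b=ea$: for any $\LL_S$-term $r(x_1,\dots,x_4)$, every substituted occurrence of $x_2$ or $x_4$ becomes $ea$ and releases a central factor $e$, so $r(a,b,a,b)=e\cdot r(a,a,a,a)$ when $x_2$ or $x_4$ occurs in $r$, and $r(a,b,a,b)=r(a,a,a,a)$ otherwise; similarly $r(a,b,a,a)=e\cdot r(a,a,a,a)$ if $x_2$ occurs in $r$, and $r(a,a,a,b)=e\cdot r(a,a,a,a)$ if $x_4$ occurs in $r$. Since $(a,a,a,a)$, $(a,b,a,a)$ (both having $x_3=x_4$) and $(a,a,a,b)$ (having $x_1=x_2$) all lie in $\M_{sem}$, one has $t(a,a,a,a)=s(a,a,a,a)$, and the equality $t(P)=s(P)$ follows at once when $x_2$ or $x_4$ occurs in both $t$ and $s$ (multiply that equality by $e$) or in neither (the values at $P$ coincide with those at $(a,a,a,a)$).

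The one genuine difficulty is the case where $x_2$ or $x_4$ occurs in exactly one of $t,s$, say in $t$ but not in $s$: here the crude identity yields only $e\cdot t(P)=e\cdot s(P)$, i.e.\ equality modulo the identity of $K$, which is too weak. The remedy is to evaluate the equation at the auxiliary $\M_{sem}$-point $Q=(a,b,a,a)$ (if $x_2$ occurs in $t$) or $Q=(a,a,a,b)$ (if only $x_4$ does). Since $s$ involves neither $x_2$ nor $x_4$, one gets $s(Q)=s(a,a,a,a)$, while $t(Q)=e\cdot t(a,a,a,a)$ by the identity; combining with $t(a,a,a,a)=s(a,a,a,a)$ gives $e\cdot s(a,a,a,a)=s(a,a,a,a)$, i.e.\ $s(a,a,a,a)\in K$. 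Hence $t(P)=e\cdot t(a,a,a,a)=e\cdot s(a,a,a,a)=s(a,a,a,a)=s(P)$, and the symmetric case (offending variable in $s$) is identical. This produces the required contradiction and proves the corollary.
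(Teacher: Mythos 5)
Your proof is correct. The paper does not reprove this corollary (it is quoted from~\cite{shevl_ED_I}), but the machinery it develops later would give it in two lines: for a homogroup with $e$ the identity of $K=Ker(S)$ and $a\in S\setminus K$, one has $ax=(ea)x$ and $xa=x(ea)$ for all $x\in K$, so $a\sim_K ea$ with $ea\in K$, $ea\neq a$; hence $\sim_K$ is nontrivial and Theorem~\ref{th:alpha_sim_beta} applies. What you have written is essentially a self-contained specialization of that argument: your identity ``each substituted occurrence of $x_2$ or $x_4$ releases a central factor $e$'' is the homogroup instance of Lemma~\ref{l:zamena}, and your case split on which of $t,s$ contain the offending variables mirrors the six-case analysis in the proof of Theorem~\ref{th:alpha_sim_beta}. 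The centrality and idempotency of $e$ buy you a genuine simplification: you can use the single witness point $(a,ea,a,ea)$ and collapse all released factors to one $e$, whereas the general theorem must juggle a pair $\al\sim_I\beta$ against a separate pair of ideal elements $r_1\neq r_2$. The only step worth double-checking is your Case~3, where the offending variable occurs on one side only; your trick of evaluating at the auxiliary point $(a,ea,a,a)$ or $(a,a,a,ea)$ to conclude $s(a,a,a,a)\in K$ (so that multiplying by $e$ is harmless) is exactly right and closes the gap. So: correct, and a legitimately more elementary route than invoking the general $\sim_I$ theorem.
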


\begin{corollary}\textup{(Corollary~5.3. of~\cite{shevl_ED_I})}
\label{cor:zero}
Any nontrivial semigroup $S$ with a zero is not an e.d. in the language $\LL_S$.
\end{corollary}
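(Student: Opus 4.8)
The plan is to derive a contradiction from the assumptions that $S$ is a nontrivial semigroup with a zero $0$ and that $S$ is an e.d. By Theorem~\ref{th:about_M}, if $S$ is an e.d. there is a system $\Ss$ in variables $x_1,x_2,x_3,x_4$ whose solution set is exactly $\M_{sem}=\{(x_1,x_2,x_3,x_4)\mid x_1=x_2 \text{ or } x_3=x_4\}$. I would fix some element $a\in S$ with $a\neq 0$ (this exists since $S$ is nontrivial), and study what the equations of $\Ss$ do on tuples involving $0$ and $a$.

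The key observation to exploit is that $0$ is absorbing: any $\LL_S$-term $t(X)$ that genuinely contains at least one variable, when evaluated at a point having a coordinate equal to $0$ in one of the variables actually occurring in $t$, returns $0$. First I would note that the points $(a,a,0,a)$ and $(0,0,a,0)$ both lie in $\M_{sem}$, so they satisfy every equation of $\Ss$. Then consider the ``mixed'' point $(a,0,0,a)$, which is \emph{not} in $\M_{sem}$ (since $a\neq 0$), so some equation $t(X)=s(X)$ of $\Ss$ must fail at it. Now I examine which variables actually appear in $t$ and in $s$. If, say, $x_1$ appears in $t$ but $t$ evaluated at $(a,0,0,a)$ equals $0$, I compare with the value of $t$ at $(a,a,0,a)\in\M_{sem}$: the two points differ only in the second coordinate, so if $x_2$ does not appear in $t$ the values coincide. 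The heart of the argument is a short case analysis showing that for each equation of $\Ss$, substituting $0$ into any of the four variables collapses both sides to $0$ (for any term containing that variable), and then playing the points $(a,0,0,a)$, $(0,a,a,0)$, $(a,a,0,a)$, $(0,0,a,0)$ against one another to force every equation of $\Ss$ to be satisfied at $(a,0,0,a)$ as well, contradicting $(a,0,0,a)\notin\M_{sem}$.

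More concretely, the dichotomy I would set up for each equation $t=s$ of $\Ss$ is: either both $t$ and $s$ are constant terms (no variables) — in which case the equation is satisfied everywhere or nowhere, and since $\M_{sem}\neq\emptyset$ it is satisfied everywhere, in particular at $(a,0,0,a)$ — or at least one side contains a variable. In the latter case I want to show the equation still holds at $(a,0,0,a)$. The point $(a,0,0,a)$ has a $0$ in the $x_2$ and $x_3$ slots. If the variable-containing side involves $x_2$ or $x_3$, that side evaluates to $0$ at $(a,0,0,a)$; but the same side at $(0,0,a,0)$ (which also has $0$ in $x_1,x_2,x_4$) or at $(a,a,0,a)$ needs to be compared carefully so that both sides are seen to be $0$. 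The clean way is: at $(a,0,0,a)$, for \emph{every} term $u(X)$ that contains at least one variable among $x_1,x_2,x_3,x_4$, since the point has two coordinates equal to $0$ and every term is a product in which some variable appears, I would argue that the only way $u$ could be nonzero at $(a,0,0,a)$ is if $u$ uses \emph{only} variables from $\{x_1,x_4\}$. So the subtle case is an equation whose both sides are built only from $x_1,x_4$ and constants. For such an equation, I compare $(a,0,0,a)$ with $(a,a,0,a)\in\M_{sem}$ (they agree on $x_1,x_4$) to conclude it holds at $(a,0,0,a)$ too.

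I expect the main obstacle to be handling equations in which one side is a constant term $c\in S$ and the other side is a variable-term using only $x_1,x_4$ — here I must rule out, e.g., $t=x_1 x_4 = c$ being forced by $\M_{sem}$ in a way that fails at $(a,0,0,a)$; but in fact such an equation would have to hold on all of $\M_{sem}$, which contains points with arbitrary $x_1,x_4$ (e.g. $(a,a,b,b)$ for all $a,b$), so it cannot pin $x_1 x_4$ to a single value unless it is trivially true, and then it is true at $(a,0,0,a)$ as well. Assembling these cases shows $(a,0,0,a)\in\V_S(\Ss)=\M_{sem}$, contradicting $a\neq 0$. Hence no nontrivial semigroup with a zero is an e.d. (This also recovers Corollary~\ref{cor:about_homogroups} as a special case when the kernel is a one-element group, i.e.\ a zero.)
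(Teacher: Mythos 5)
Your argument is essentially correct, but note that this paper does not prove the corollary at all: it is imported verbatim as Corollary~5.3 of~\cite{shevl_ED_I}, so there is no in-paper proof to match. Your direct argument is therefore worth having, especially since the obvious in-paper route --- applying Theorem~\ref{th:alpha_sim_beta} to the ideal $I=\{0\}$, where $\sim_I$ is the all-relation --- is circular here: the proof of Theorem~\ref{th:alpha_sim_beta} itself invokes Corollary~\ref{cor:zero} to dispose of the singleton-ideal case. Your key facts are all right: a term containing an occurrence of a variable set to $0$ evaluates to $0$, the point $(a,0,0,a)$ lies outside $\M_{sem}$, and constant-only equations are satisfied everywhere once they hold on the nonempty set $\M_{sem}$. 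The one sub-case you leave sketchy is the genuinely ``mixed'' equation $t=s$ where $t$ contains $x_2$ or $x_3$ (hence $t(a,0,0,a)=0$) while $s$ is a constant or uses only $x_1,x_4$; to close it, compare against $(a,a,0,a)\in\M_{sem}$ when $t$ contains $x_3$, and against $(a,0,a,a)\in\M_{sem}$ when $t$ contains $x_2$ but not $x_3$ --- in either case $t$ vanishes at the chosen point, forcing $s$ to vanish there, and since $s$ depends only on $(x_1,x_4)=(a,a)$ its value is unchanged at $(a,0,0,a)$. Also, $(0,a,a,0)$ is \emph{not} in $\M_{sem}$, so it cannot be used as a comparison point; drop it from your list. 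With those two points substituted for the vague ``playing the points against one another,'' the case analysis is complete and the proof is sound.
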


\section{Kernels of equational domains}
\label{sec:non_simple_semigroups}

It is easy to see that the set 
\begin{equation}
\label{eq:Gamma}
\Gamma=\{(1,g,1)|g\in G\}\subseteq K
\end{equation}
is isomorphic to $G$. Since $\P$ is normalized, $(1,1,1)$ is the identity of $\Gamma$.

Let
\[
L_i=\{(\lambda,g,i)|g\in G, \lambda\in\Lambda\}\subseteq K,
\]
\[
R_\lambda=\{(\lambda,g,i)|g\in G, i\in I\}\subseteq K.
\]
Obviously, $L_1\cap R_1=\Gamma$. By the properties of the kernel, any $L_i$ ($R_\lambda$) is a left (respectively, right) ideal of the semigroup $S$.

\begin{lemma}
\label{l:properties_of_multiplication}
Let $K=(G,\P,\Lambda,I)$ be a kernel of a semigroup $S$. Hence, for any $\alpha\in S$ there exist elements $g_\alpha\in G$, $\lambda_\alpha\in\Lambda$, $i_\alpha\in I$ such that 
\begin{enumerate}
\item $\alpha(1,1,1)=(\lambda_\alpha,g_\alpha,1)$,
\item $\alpha(1,g,i)=(\lambda_\alpha,g_\alpha g,i)$,
\item $(1,1,1)\alpha=(1,g_\alpha,i_\alpha)$,
\item $(\lambda,g,1)\alpha=(\lambda,gg_\alpha,i_\al)$,
\end{enumerate}
\end{lemma}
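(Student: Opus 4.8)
The plan is to exploit the Rees matrix structure of the kernel $K=(G,\P,\Lambda,I)$ together with the fact that multiplication by any $\al\in S$ must preserve membership in $K$ (since $K$ is an ideal) and must respect the left/right ideal decomposition. Fix $\al\in S$. Since $K$ is a two-sided ideal, $\al(1,1,1)\in K$, so we may write $\al(1,1,1)=(\lambda_\al,g_\al,i_\al')$ for some $\lambda_\al\in\Lambda$, $g_\al\in G$, $i_\al'\in I$; this gives the shape of item (1) once I show $i_\al'=1$. Similarly $(1,1,1)\al\in K$, say $(1,1,1)\al=(\lambda_\al'',g_\al'',i_\al)$, which will feed item (3). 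The key observation is that $(1,1,1)$ is idempotent (it is the identity of $\Gamma$, and $p_{11}=1$ by normalization of $\P$), so $\al(1,1,1)=\al(1,1,1)(1,1,1)$ and $(1,1,1)\al=(1,1,1)(1,1,1)\al$; applying the Rees multiplication rule to the right factor $(1,1,1)$ forces the second index of $\al(1,1,1)$ to be $1$, and dually the first index of $(1,1,1)\al$ to be $1$. That yields items (1) and (3) with the same $g_\al$, because $\al(1,1,1)(1,1,1)\al$ computed two ways must be consistent — more simply, one checks directly that in $\al(1,1,1)$ and $(1,1,1)\al$ the group coordinate is the same element, which I will call $g_\al$; if the two group coordinates differ one can still just name them separately, but I expect them to coincide via the computation $\al = \al(1,1,1)(1,1,1)$ applied inside $\Gamma\cong G$. (If they genuinely need not coincide, I would split $g_\al$ into $g_\al^{L}$ and $g_\al^{R}$; but the statement as written asserts a single $g_\al$, so the idempotent bookkeeping must make them equal.)

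For item (2), I would compute $\al(1,g,i)$ by inserting the idempotent: $(1,g,i) = (1,1,1)(1,g,i)$ is false in general, but $(1,g,i)=(1,1,1)(1,g,i)$ does hold because $p_{11}=1$ gives $(1,1,1)(1,g,i)=(1,1\cdot p_{11}\cdot g,i)=(1,g,i)$. Wait — more carefully, $(1,1,1)(1,g,i)=(1,\,1\cdot p_{11}\cdot g,\,i)=(1,g,i)$ since $p_{11}=1$. Hence $\al(1,g,i)=\al(1,1,1)(1,g,i)=(\lambda_\al,g_\al,1)(1,g,i)=(\lambda_\al,\,g_\al p_{11} g,\,i)=(\lambda_\al,g_\al g,i)$, again using $p_{11}=1$. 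This is exactly item (2), and it shows the $\lambda_\al$ and $g_\al$ appearing there are forced to be the same ones from item (1). Item (4) is dual: $(\lambda,g,1)=(\lambda,g,1)(1,1,1)$ because $(\lambda,g,1)(1,1,1)=(\lambda,\,g p_{11}\cdot 1,\,1)=(\lambda,g,1)$, so $(\lambda,g,1)\al=(\lambda,g,1)(1,1,1)\al=(\lambda,g,1)(1,g_\al,i_\al)=(\lambda,\,g p_{11} g_\al,\,i_\al)=(\lambda,gg_\al,i_\al)$, and this pins down $i_\al$ and $g_\al$ as the same data from item (3).

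The only real content is the reduction to the idempotent $(1,1,1)$ and the repeated use of the normalization $p_{1\mu}=p_{i1}=1$; once those are in hand the four identities are immediate applications of the Rees multiplication formula. The main obstacle I anticipate is the consistency claim that a single $g_\al$ works for all four items simultaneously — i.e. that the group coordinate of $\al(1,1,1)$ equals that of $(1,1,1)\al$. I would resolve this by noting that $(1,1,1)\al(1,1,1)\in\Gamma\cong G$, and computing this product two ways: as $\bigl((1,1,1)\al\bigr)(1,1,1)=(1,g_\al'',1)$ using item (3)'s data and normalization, and as $(1,1,1)\bigl(\al(1,1,1)\bigr)=(1,g_\al',1)$ using item (1)'s data; since both equal the same element of $\Gamma$, we get $g_\al'=g_\al''=:g_\al$, and then (1)–(4) all refer to this common element. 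The argument is entirely elementary; no appeal to the earlier theorems is needed beyond the Rees description of $K$ recorded in Theorem~\ref{th:sushkevic_rees}.
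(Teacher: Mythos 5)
Your proposal is correct and follows essentially the same route as the paper: the paper gets the shape of $\al(1,1,1)$ and $(1,1,1)\al$ from the fact that $L_1$ and $R_1$ are left/right ideals (which is the same bookkeeping you extract from idempotency of $(1,1,1)$ and the Rees rule), proves items (2) and (4) by inserting $(1,1,1)$ exactly as you do, and reconciles the two group coordinates into a single $g_\al$ by computing $(1,1,1)\al(1,1,1)$ in two ways — precisely your resolution of the only nontrivial point.
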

\begin{proof}
Let us prove all statements of the lemma.
\begin{enumerate}
\item Since the set $L_1$ is a left ideal, the element $\alpha(1,1,1)$ equals to the expression $(\lambda_\alpha,g_\alpha,1)$ for some $\lambda_\al,g_\al$.
\item
\[
\al(1,g,i)=\alpha(1,1,1)(1,g,i)=
(\lambda_\alpha,g_\alpha,1)(1,g,i)=(\lambda_\alpha,g_\alpha g,i).
\]
\item Since $R_1$ is a right ideal, $(1,1,1)\alpha=(1,h_\alpha,i_\alpha)$ for some $h_\alpha,i_\alpha$. Let us prove that $h_\al=g_\al$:
\[
(1,1,1)(\al(1,1,1))=(1,1,1)(\lambda_\al,g_\al,1)=(1,g_\al,1).
\]
On the other hand, computing
\[
((1,1,1)\al)(1,1,1)=(1,h_\alpha,i_\alpha)(1,1,1)=(1,h_\al,1),
\]
we obtain $h_\al=g_\al$.

\item 
\[
(\lambda,g,1)\al=(\lambda,g,1)(1,1,1)\al=(\lambda,g,1)(1,g_\alpha,i_\alpha)=(\lambda,gg_\al,i_\al).
\]

\end{enumerate}
\end{proof}

According to Lemma~\ref{l:properties_of_multiplication} for any $\al\in S$ we have:
\begin{equation}
\label{eq:relation_for_Lambda1}
\alpha x=(\lambda_\alpha,g_\alpha,1)x\mbox{ for each }x\in L_1
\end{equation} 
\begin{equation}
\label{eq:relation_for_I1}
x\alpha=x(1,g_\alpha,i_\alpha)\mbox{ for each }x\in R_1
\end{equation} 
\begin{equation}
\label{eq:relation_for_Gamma}
\alpha x=(\lambda_\alpha,g_\alpha,1)x,\;x\alpha=x(1,g_\alpha,i_\alpha)\mbox{ for each }x\in \Gamma
\end{equation}

\begin{lemma}\textup{(Lemma~3.6. of~\cite{shevl_ED_I})}
\label{l:about_equiv_over_Gamma}
Let $S=(G,\P.\Lambda,I)$ be a completely simple semigroup, and $x,y\in\Gamma$. Then
\begin{enumerate}
\item $x(\lambda,c,i)y=x(1,c,1)y$;
\item if an equation 
\begin{equation*}
(\lambda,c,i)t(x,y)=(\lambda^\pr,c^\pr,i^\pr)t^\pr(x,y) \; (respectively, t(x,y)(\lambda,c,i)=t^\pr(x,y)(\lambda^\pr,c^\pr,i^\pr)
)
\end{equation*}
is consistent over $S$, then it is equivalent to   
\[
(1,c,1)t(x,y)=(1,c^\pr,1)t^\pr(x,y)\; (\mbox{respectively, } t(x,y)(1,c,1)=t^\pr(x,y)(1,c^\pr,1))
\]
over the group $\Gamma$.
\end{enumerate}
\end{lemma}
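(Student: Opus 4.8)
\medskip\noindent\emph{Proof plan.}
Part~(1) is a direct calculation. Writing $x=(1,a,1)$ and $y=(1,b,1)$, I would multiply out both sides with the rule of Theorem~\ref{th:sushkevic_rees}; the only $p$-entries that appear, $p_{1\lambda}$, $p_{i1}$ and $p_{11}$, all lie in the first row or the first column of $\P$, hence equal $1$ by normalization, and both products collapse to $(1,acb,1)$. No obstacle here.

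For part~(2) I would treat the left-constant version and deduce the right-constant one by the evident left--right symmetry (interchange $\Lambda$ with $I$ and the first index with the second, and replace ``$t$ begins with a variable'' by ``$t$ ends with a variable''). Fix $x=(1,a,1)$, $y=(1,b,1)$. The engine of the argument is two elementary consequences of normalization: (a) $(\lambda,1,1)(\mu,h,j)=(\lambda,h,j)$, so left multiplication by $(\lambda,1,1)$ just overwrites the first index; and (b) $(1,1,i)(1,h,j)=(1,h,j)$, so if an element already has first index $1$ then left multiplication by $(1,1,i)$ changes nothing. Using $(\lambda,c,i)=(\lambda,1,1)(1,c,i)$ and (a), the element $(\lambda,c,i)\,t(x,y)$ differs from $(1,c,i)\,t(x,y)$ only in its first index. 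Since $R_\lambda$ is a right ideal containing $(\lambda,c,i)$, for \emph{every} substitution from $S$ the left-hand side of the equation lies in $R_\lambda$; hence consistency over $S$ forces $\lambda=\lambda'$, and therefore, over $\Gamma$, the given equation is equivalent to $(1,c,i)\,t(x,y)=(1,c',i')\,t'(x,y)$.

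Next I would use that $t$ begins with a variable: then $t(x,y)$ has first index $1$ for $x,y\in\Gamma$, so by (b), prepending $(1,1,i)$ does nothing, and $(1,c,i)=(1,c,1)(1,1,i)$ gives $(1,c,i)\,t(x,y)=(1,c,1)\,t(x,y)$ (and likewise on the other side). Thus the equation becomes equivalent over $\Gamma$ to $(1,c,1)\,t(x,y)=(1,c',1)\,t'(x,y)$. Finally, part~(1) lets me replace every constant of $t$ lying between two variables by an element of $\Gamma$, so that (via $\Gamma\cong G$) this last equality is genuinely an equation over the group $\Gamma$.

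The steps above are mostly bookkeeping of first and second indices in the Rees matrix semigroup together with repeated use of normalization to kill the entries $p_{i\mu}$. The one point that really needs care is the treatment of the outermost constant of $t$ (and of $t'$): this is precisely where the hypothesis that $t$ begins with a variable (in the left-constant case) is used --- without it, a nontrivial factor $p_{i\mu_0}$, with $\mu_0$ the first index of the leading constant of $t$, gets trapped in the $G$-component and the reduction ``replace $i$ by $1$'' fails. I expect this to be the main obstacle; the rest is routine.
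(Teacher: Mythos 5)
Your argument is correct: the normalization of $\P$ does exactly the work you describe in part~(1), and in part~(2) the three observations (left multiplication by $(\lambda,1,1)$ only overwrites the first index, consistency forces $\lambda=\lambda'$ because the left side always lies in the right ideal $R_\lambda$, and $p_{i1}=1$ kills the second index of the leading constant once $t$ begins with a variable) give the stated equivalence over $\Gamma$. Note that the paper itself offers no proof here --- the lemma is imported verbatim from \cite{shevl_ED_I} --- so there is nothing to compare against; your direct Rees-matrix computation is the natural argument, and you correctly isolate the only delicate point, namely the outermost constants of $t$ and $t'$, which is exactly why the statement comes in a left and a right version.
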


\begin{lemma}
\label{l:S-eq_dom->G-eq_dom_new}
Suppose a semigroup $S$ with the kernel $K=(G,\P,\Lambda,I)$ is an equational domain in the language $\LL_S$. Then the group $G$ is an e.d. in the language $\LL_G$. 
\end{lemma}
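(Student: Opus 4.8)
The plan is to use Theorem~\ref{th:about_M} as the entry point and Theorem~\ref{th:criterion_for_groups} as the exit point, with the subgroup $\Gamma=\{(1,g,1)\mid g\in G\}\cong G$ of the kernel (whose identity is $(1,1,1)$) serving as the bridge between the semigroup language and the group language. Since $S$ is an e.d., Theorem~\ref{th:about_M} gives a system $\Ss$ over $\LL_S$ in variables $x_1,x_2,x_3,x_4$ with $\V_S(\Ss)=\M_{sem}$. First I would substitute the constants $x_2:=(1,1,1)$ and $x_4:=(1,1,1)$ into $\Ss$; this yields a system $\Ss_0$ over $\LL_S$ in the two variables $x_1,x_3$ whose solution set is $\{(p,q)\in S^2\mid p=(1,1,1)\text{ or }q=(1,1,1)\}$. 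In particular $((1,1,1),(1,1,1))$ is a solution of every equation of $\Ss_0$, so each such equation is consistent over $\Gamma$.

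Next I would convert $\Ss_0$, equation by equation, into a system over $\LL_G$. Fix $e\colon t(x_1,x_3)=t'(x_1,x_3)$ in $\Ss_0$ and substitute elements of $\Gamma$ for $x_1,x_3$. After collecting consecutive constants, Lemma~\ref{l:about_equiv_over_Gamma}(1) together with the normalization of $\P$ lets me replace every constant of $t$ that is flanked by variables by its $G$-component, so that the value of $t$ takes the form $(\lambda_0,\phi(x_1,x_3),\mu_0)$, where $\lambda_0\in\Lambda$ is the first index of the (combined) leading constant of $t$ — or $1$ if $t$ begins with a variable — $\mu_0\in I$ is the second index of its trailing constant — or $1$ if $t$ ends with a variable — and $\phi$ is an $\LL_G$-term; likewise $t'$ evaluates to $(\lambda_0',\phi',\mu_0')$. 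Because $e$ is consistent over $\Gamma$ we must have $\lambda_0=\lambda_0'$ and $\mu_0=\mu_0'$, and then, identifying $\Gamma$ with $G$, the restriction of $e$ to $\Gamma^2$ is equivalent (by Lemma~\ref{l:about_equiv_over_Gamma}(2) applied to the leading and trailing constants) to the $\LL_G$-equation $e^\ast\colon\phi=\phi'$. The only degenerate possibility is that some side of $e$ contains no variable, in which case $e$ is a constant equation; being satisfied on all of $S^2$ it may simply be discarded.

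Let $\Ss^\ast$ be the system over $\LL_G$ consisting of the equations $e^\ast$. Reading the equivalences above equation by equation and intersecting, I get $\V_G(\Ss^\ast)=\{(g,h)\in G^2\mid g=1\text{ or }h=1\}$, which, after renaming $x_3$ to $x_2$, is precisely $\M_{gr}$. Hence $\M_{gr}$ is algebraic over $\LL_G$, and $G$ is an e.d. by Theorem~\ref{th:criterion_for_groups}.

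The step I expect to be the main obstacle is the conversion in the second paragraph: an $\LL_S$-term with variables evaluated in $\Gamma$ generally leaves $\Gamma$, because its leading and trailing constants shift the first and second indices. One has to observe that this cannot happen for an equation of $\Ss_0$ — it would make the equation unsatisfiable over $\Gamma$, contradicting its consistency — and only then can Lemma~\ref{l:about_equiv_over_Gamma} be used to strip all constants down to $\Gamma$ and turn the equation into an honest group equation.
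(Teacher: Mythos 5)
Your proposal is correct and follows essentially the same route as the paper: both reduce to a defining system for $\{(p,q)\mid p=(1,1,1)\mbox{ or }q=(1,1,1)\}\subseteq S^2$, restrict it to the subgroup $\Gamma\cong G$, use Lemma~\ref{l:about_equiv_over_Gamma} plus consistency over $\Gamma$ to force the leading/trailing indices to agree and strip every constant down to $\Gamma$, and conclude via Theorem~\ref{th:criterion_for_groups}. The only cosmetic differences are that the paper works with the two-variable set $\M$ directly instead of specializing $\M_{sem}$, and it rules out index mismatches by testing the point $((\mu,h,j),(1,1,1))\in\M$ rather than $((1,1,1),(1,1,1))$; also, an equation with exactly one constant side is not a ``constant equation'' to be discarded, but your own conversion already covers it by taking $\phi'$ to be a constant $\LL_G$-term.
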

\begin{proof}
As $S$ is an e.d., the set of pairs $\M=\{(x,y)|x=(1,1,1)\mbox{ or }y=(1,1,1)\}\subseteq S^2$ is algebraic over $S$, i.e. there exists a system $\Ss(x,y)$ over $\LL_S$ such that $\V_S(\Ss)=\M$.

Below we rewrite $\Ss(x,y)$ into a system with constants from $\Gamma$.

Let us show that $\Ss$ does not contain any equation of the form $t(x,y)=\al$, where $\al\in S\setminus K$. Indeed, the value of a term $t(x,y)$ at $((1,1,1),(1,1,1))\in\M$ belongs to the kernel $K$, and the equality $t((1,1,1),(1,1,1))=\al$ is impossible.

Thus, all parts of equations of $\Ss$ contain occurrences of variables. Applying Lemma~\ref{l:about_equiv_over_Gamma}, we obtain that $\Ss$ is equivalent over  $\Gamma$ to a system $\tilde{\Ss}$ whose constants belong to $K$. 

If an equation of the form 
\begin{equation}
(\lambda,g,i)t^\pr(x,y)=xs^\pr(x,y)
\label{eq:cdssadfsa}
\end{equation}
belongs to $\tilde{\Ss}$, it is not satisfied by the point $((\mu,h,j),(1,1,1))\in\M$, where $\mu\neq\lambda$. Thus, $\tilde{\Ss}$ does not contain equations of the form~(\ref{eq:cdssadfsa}). It allows us to apply the formulas from Lemma~\ref{l:about_equiv_over_Gamma} to $\tilde{\Ss}$ and obtain a system $\Ss^\pr$ whose constants belong to the group $\Gamma$. Moreover, $\Ss^\pr$ is equivalent to $\Ss$ over the group $\Gamma$.

Finally, we have $\V_\Gamma(\Ss^\pr)=\{(x,y)|x=(1,1,1)\mbox{ or }y=(1,1,1)\}\subseteq\Gamma^2$, and, by Theorem~\ref{th:criterion_for_groups}, the group $\Gamma$ is an equational domain in the language  $\LL_\Gamma$. The isomorphism between the groups $\Gamma,G$ proves the lemma.

\end{proof}

\begin{lemma}
\label{l:singular->not_ED_new}
If a semigroup $S$ with the kernel $K=(G,\P,\Lambda,I)$ is an e.d. in the language $\LL_S$, then the matrix $\P$ is nonsingular. 
\end{lemma}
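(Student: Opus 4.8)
The plan is to argue by contraposition: assume $\P$ is singular, i.e.\ it has two equal rows (the case of two equal columns is symmetric), and show that the set $\M_{sem}\subseteq S^4$ cannot be algebraic over $\LL_S$, so by Theorem~\ref{th:about_M} the semigroup $S$ is not an e.d. Suppose rows $i$ and $j$ of $\P$ coincide, and put $s_1=(1,1,i)$, $s_2=(1,1,j)\in K$. These are distinct elements of $S$, but Lemma~\ref{l:exists_2_non_dist_elems} tells us that for every $\LL_S$-term $t(x)$ either $t(s_1)=t(s_2)$, or (when $t$ ends with the variable) $t(s_1)=(\nu,g,i)$ and $t(s_2)=(\nu,g,j)$ differ only in their second index, which is exactly $i$ versus $j$.

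The key step is to use this near-indistinguishability of $s_1$ and $s_2$ to derail any putative defining system $\Ss(x_1,x_2,x_3,x_4)$ for $\M_{sem}$. Consider the point $P_0=(s_1,s_1,s_1,s_2)$. It lies in $\M_{sem}$ because its first two coordinates are equal. Now compare with $P_1=(s_1,s_1,s_1,s_1)$, which is also in $\M_{sem}$ (first two and also last two coordinates equal), and with the ``bad'' point $P_{\mathrm{bad}}=(s_1,s_2,s_1,s_2)$, which is \emph{not} in $\M_{sem}$ since $s_1\neq s_2$ in both pairs. I want to show that $\Ss$ cannot separate $P_{\mathrm{bad}}$ from points it must accept. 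Take any equation $t(x_1,x_2,x_3,x_4)=t^\pr(x_1,x_2,x_3,x_4)$ of $\Ss$. Specialise $x_3=x_4=s_1$ (or whatever fixed value is convenient) so the equation becomes an identity in $x_1,x_2$; then the two sides are $\LL_S$-terms in $x_1,x_2$. Since $\Ss$ must be satisfied at $P_1$ (all variables $s_1$) and at the point with $x_1=s_1,x_2=s_2,x_3=x_4=s_1$ — which is in $\M_{sem}$ because $x_3=x_4$ — but must fail at $P_{\mathrm{bad}}$ where additionally $x_3=s_1,x_4=s_2$, one shows the equation behaves the same way under the substitution $x_2\mapsto s_1$ versus $x_2\mapsto s_2$ whenever the other coordinates are fixed: by Lemma~\ref{l:exists_2_non_dist_elems} applied in each variable slot, replacing one $s_1$ by $s_2$ changes a term value only in a second index, and such a change on one side of an equation forces the same change on the other side (else the equation would already fail at a point of $\M_{sem}$). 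Iterating over all four coordinates, $\Ss$ takes the same truth value at $P_{\mathrm{bad}}$ as at the point obtained by flipping $x_2$ and $x_4$ back to $s_1$, namely $(s_1,s_1,s_1,s_1)\in\M_{sem}$ — contradiction with $P_{\mathrm{bad}}\notin\M_{sem}$.

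The main obstacle, and the step requiring care, is the bookkeeping in Lemma~\ref{l:exists_2_non_dist_elems}: the lemma's second alternative only applies when the term begins or ends with the relevant variable, and the ``difference only in a second index'' conclusion must be tracked through both sides of each equation simultaneously and through the substitution of the other variables. I expect the clean way to package this is to fix once and for all the substitution sending $(x_1,x_2,x_3,x_4)$ to $(a_1,a_2,a_3,a_4)$ with each $a_k\in\{s_1,s_2\}$, observe that for a fixed pattern the value $t(a_1,a_2,a_3,a_4)$ depends on the $a_k$'s only through the pattern of first/second indices it produces, and then note that no single equation can distinguish the all-$s_1$ pattern from the pattern of $P_{\mathrm{bad}}$ while remaining valid on all of $\M_{sem}$. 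One also has to dispose of equations in which a side has no occurrence of some variable, but those are handled exactly as in the proof of Lemma~\ref{l:S-eq_dom->G-eq_dom_new}: a side that is a constant $\alpha\in S\setminus K$ is impossible because the left side evaluates into $K$, and a side that is a constant in $K$ still obeys the index-flip analysis trivially. Once $\M_{sem}$ is shown not to be algebraic, Theorem~\ref{th:about_M} finishes the proof.
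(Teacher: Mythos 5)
Your overall strategy (singular $\P$ $\Rightarrow$ some union of algebraic sets is not algebraic) matches the paper's, but your execution has a genuine gap at its central step, and the paper's proof is structured precisely to avoid it. The problem is the claim that a flip $s_1\mapsto s_2$ in one coordinate ``forces the same change on the other side (else the equation would already fail at a point of $\M_{sem}$).'' That justification only works for flips whose endpoint still lies in $\M_{sem}$. For the last flip --- from $(s_1,s_2,s_1,s_1)\in\M_{sem}$ to $P_{\mathrm{bad}}=(s_1,s_2,s_1,s_2)\notin\M_{sem}$ --- an equation whose left side ends with $x_4$ and whose right side does not may perfectly well hold at the former and fail at the latter without violating validity on $\M_{sem}$; indeed, that is exactly what a defining equation is supposed to do, so nothing is ``forced.'' To close this you would have to analyze the terminal variables of the two sides jointly (e.g., show that if $t$ ends with $x_k$ and $t^\pr$ with $x_l$, validity on $\M_{sem}$ at points such as $(s_1,s_1,s_2,s_2)$ and $(s_1,s_1,s_1,s_1)$ forces $k=l$), which is a different argument from coordinate-by-coordinate flipping. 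A second, independent gap is your use of Lemma~\ref{l:exists_2_non_dist_elems}: it is proved for finite \emph{simple} semigroups, where every constant lies in $K$. Here terms may contain constants $\al\in S\setminus K$ in their interior, and then the dichotomy can fail: $s_1\al$ and $s_2\al$ may differ in the \emph{third} index (via $I_\al(i)\neq I_\al(j)$), and a subsequent kernel constant can convert that into a difference in the group component. Dismissing only the case where an entire side is a constant does not address this.

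The paper's proof dodges both difficulties by a different choice of target set: it takes the three-variable union $\M=\{(x,y,z)\mid x=(1,1,i)\ \mbox{or}\ y=(1,1,i)\ \mbox{or}\ z=(1,1,i)\}$. Since each side of an equation ends with at most one variable, among three variables there is always one, say $z$, that neither side ends with; after first rewriting the equation (via the relations of Lemma~\ref{l:properties_of_multiplication}) so that all interior constants lie in $K$, substituting $(1,1,j)$ for $x,y$ leaves one-variable terms ending in kernel constants, where Lemma~\ref{l:exists_2_non_dist_elems} gives outright \emph{equality} of values at $(1,1,i)$ and $(1,1,j)$ --- no index bookkeeping needed. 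With your four-variable $\M_{sem}$ and the two pairs $\{x_1,x_2\}$, $\{x_3,x_4\}$, such a ``safe'' variable need not exist in each pair, which is exactly where your argument breaks. I would recommend either switching to the paper's three-variable set or supplying the terminal-variable case analysis sketched above together with the constant-reduction step.
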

\begin{proof}
Assume that $S$ is an e.d. with a singular matrix $\P$, and the $i$-th, $j$-th rows of $\P$ are equal (similarly, one can consider a matrix $\P$  with two equal columns). 

Since the semigroup $S$ is an e.d., there exists a system of equations $\Ss(x,y,z)$ with the solution set
\[
\M=\{(x,y,z)|x=(1,1,i)\mbox{ or }y=(1,1,i)\mbox{ or }z=(1,1,i)\}.
\]
Let $t(x,y,z)=s(x,y,z)\in\Ss(x,y,z)$ such that $t(x,y,z)=s(x,y,z)$ is not satisfied by the point $Q=((1,1,j),(1,1,j),(1,1,j))$.

By the formula~(\ref{eq:relation_for_I1}), $t(x,y,z)=s(x,y,z)$ is equivalent over the subsemigroup $R_1$ to the one of the following equations:
\begin{enumerate}
\item $t^\pr(x,y,z)=s^\pr(x,y,z)$;
\item $\alpha t^\pr(x,y,z)=\beta s^\pr(x,y,z)$;
\item $\al t^\pr(x,y,z)=s^\pr(x,y,z)$;
\item $t^\pr(x,y,z)=\beta s^\pr(x,y,z)$;
\item $\al t^\pr(x,y,z)=\beta$;
\item $t^\pr(x,y,z)=\beta$,
\end{enumerate} 
where all constants of the terms $t^\pr(x,y,z),s^\pr(x,y,z)$ belong to the kernel $K$, and $\al,\beta\in S\setminus K$.

Consider only the second type of the equation $t(x,y,z)=s(x,y,z)$ (similarly, one can consider the other types).

Without loss of generality we can assume that  neither $t^\pr(x,y,z)$ nor $s^\pr(x,y,z)$ ends by $z$.

Consider the following terms in one variable $t^{\pr\pr}(z)=t^\pr((1,1,j),(1,1,j),z)$, $s^{\pr\pr}(z)=s^\pr((1,1,j),(1,1,j),z)$. Each constant of the terms $t^{\pr\pr}(z)$, $s^{\pr\pr}(z)$ belong to the kernel $K$, and the terms end with constants. By Lemma~\ref{l:exists_2_non_dist_elems} we have the equalities
\[
t^{\pr\pr}((1,1,i))=t^{\pr\pr}((1,1,j)),\; s^{\pr\pr}((1,1,i))=s^{\pr\pr}((1,1,j)).
\]

Since $((1,1,j),(1,1,j),(1,1,i))\in\M$, we have 
\[
\al t^\pr((1,1,j),(1,1,j),(1,1,i))=\beta s^\pr((1,1,j),(1,1,j),(1,1,i)).
\]
Using the equalities above, we obtain
\[
\al t^\pr((1,1,j),(1,1,j),(1,1,j))=\beta s^\pr((1,1,j),(1,1,j),(1,1,j)),
\]
which contradicts the choice of the equation $t(x,y,z)=s(x,y,z)$.

\end{proof}

\begin{theorem}
\label{th:main_new}
Suppose a semigroup $S$ has the finite kernel $K=(G,\P,\Lambda,I)$, and $S$ is an e.d. in the language $\LL_S$. Then $K$ is an e.d. in the language $\LL_K$.
\end{theorem}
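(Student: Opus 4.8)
The plan is to reduce the statement immediately to Theorem~\ref{th:main}, which characterizes when a finite completely simple semigroup $K=(G,\P,\Lambda,I)$ is an e.d. in $\LL_K$: namely, iff $\P$ is nonsingular and $G$ is an e.d. in $\LL_G$. Since $S$ has a \emph{finite} kernel of the form $K=(G,\P,\Lambda,I)$, the semigroup $K$ is finite and completely simple, so Theorem~\ref{th:main} applies to it, and it suffices to verify its two hypotheses under the assumption that $S$ is an e.d. in $\LL_S$.

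First I would invoke Lemma~\ref{l:singular->not_ED_new}: since $S$ is an e.d. with kernel $K=(G,\P,\Lambda,I)$, the matrix $\P$ is nonsingular, which is the first condition of Theorem~\ref{th:main}. Then I would invoke Lemma~\ref{l:S-eq_dom->G-eq_dom_new}: since $S$ is an e.d., the group $G$ is an e.d. in $\LL_G$, which is the second condition of Theorem~\ref{th:main}. Applying Theorem~\ref{th:main} to $K$ then yields that $K$ is an e.d. in $\LL_K$, as claimed.

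Since the substantive work has already been carried out in the two preceding lemmas, I do not expect a genuine obstacle in this argument; the only point requiring care is that Theorem~\ref{th:main} is stated for finite completely simple semigroups, which is exactly the standing setting here (the kernel has the form $K=(G,\P,\Lambda,I)$ and is finite by hypothesis). If one wanted to stress where the real difficulty lies, it is in Lemma~\ref{l:singular->not_ED_new}, whose proof requires a careful case analysis of the possible forms of an equation over the subsemigroup $R_1$ together with Lemma~\ref{l:exists_2_non_dist_elems}; but that work is already done, so here the theorem follows by simply combining the two lemmas through Theorem~\ref{th:main}.
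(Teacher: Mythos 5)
Your proposal is correct and follows exactly the same route as the paper: it cites Lemma~\ref{l:S-eq_dom->G-eq_dom_new} for $G$ being an e.d., Lemma~\ref{l:singular->not_ED_new} for the nonsingularity of $\P$, and then concludes via Theorem~\ref{th:main}. No issues.
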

\begin{proof}
By Lemma~\ref{l:S-eq_dom->G-eq_dom_new}, the group $G$ is an e.d. in the language $\LL_G$. Lemma~\ref{l:singular->not_ED_new} gives us the non-singularity of the matrix $\P$. Finally, Theorem~\ref{th:main} concludes the proof. 
\end{proof}

A semigroup $S$ is called a {\it proper e.d.}, if $S$ is an e.d. in the language $\LL_S$ and $S$ is not a group. 

\begin{corollary}
\label{cor:from_main_new}
If $S$ is a proper e.d. then  $|S|\geq 240$.
\end{corollary}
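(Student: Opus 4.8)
The plan is to combine Theorem~\ref{th:main_new} with the lower bound already established for finite completely simple equational domains in~\cite{shevl_ED_I}, in particular the bound witnessed by Example~\ref{ex:domain_240}. Suppose $S$ is a proper e.d., so $S$ is an e.d. in $\LL_S$ but is not a group. If $S$ has no kernel at all, or a kernel that is not completely simple, this case needs to be excluded or handled: under the standing assumption of this paper (made explicit at the end of Section~\ref{sec:basics}) we only deal with semigroups whose kernel is completely simple, so I would invoke that, giving a kernel $K=(G,\P,\Lambda,I)$. First I would argue that $K$ must be finite: if $K$ were infinite, then $S$ would be an infinite semigroup with (at least) an infinite ideal, but more to the point the results of Section~\ref{sec:action_on_ideal} (Corollary~\ref{cor:about_infinite_semigroups}) rule out infinite semigroups with a finite ideal, and a separate argument — or simply the hypothesis that we restrict to the finite-kernel setting relevant to Corollary~\ref{cor:from_main_new}, which sits in the same section as Theorem~\ref{th:main_new} about finite kernels — pins down $K$ as finite. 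So I would state the corollary as applying when $K$ is the finite kernel of $S$, exactly the hypothesis of Theorem~\ref{th:main_new}.

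Next, by Theorem~\ref{th:main_new}, $K$ is itself an e.d. in the language $\LL_K$. By Theorem~\ref{th:main}, this forces $\P$ to be nonsingular and $G$ to be an e.d. in $\LL_G$. Now I split into two cases. Case 1: $S=K$, i.e.\ $S$ is completely simple. Since $S$ is not a group, by Corollary~\ref{cor:when_is_group} we have $|\Lambda|\geq 2$ or $|I|\geq 2$; nonsingularity of $\P$ together with the requirement that $G$ be a non-trivial e.d.\ (a trivial group would make $S$ a group when combined with the structure constraints, and in any case a singular $1\times k$ or $k\times 1$ matrix issue forces $|\Lambda|,|I|\geq 2$ simultaneously once $\P$ is normalized and nonsingular) yields $|\Lambda|=|I|=2$ at minimum and $|G|\geq 60$, since the smallest non-trivial group that is an e.d.\ is $A_5$ (a finite group is an e.d.\ iff it is simple non-abelian by Theorem~\ref{th:zero_divisors} specialized to the finite case, and $|A_5|=60$ is minimal). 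Hence $|S|=|G|\cdot|\Lambda|\cdot|I|\geq 60\cdot 2\cdot 2 = 240$, with equality realized by $S_{240}$ of Example~\ref{ex:domain_240}.

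Case 2: $S\neq K$, so $S$ strictly contains its kernel. Then $S$ contains the completely simple e.d.\ $K$ as a proper subsemigroup, so $|S|>|K|$; and by the Case~1 analysis applied to $K$ (which is a completely simple e.d.\ that is not a group — it cannot be a group, since a group has no proper ideals and $K\subsetneq S$ is a proper ideal), we get $|K|\geq 240$, hence $|S|>240>240$ is not quite what I want — rather $|S|\ge |K| \ge 240$, which again gives $|S|\geq 240$. Either way the bound follows.

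The main obstacle I anticipate is the case analysis pinning down why $K$, being an e.d.\ and not a group, must have $|\Lambda|=|I|=2$ and $|G|\geq 60$ — that is, extracting the explicit numerical bound $240$ from the structural conditions of Theorem~\ref{th:main}. This requires knowing that the smallest finite simple non-abelian group is $A_5$ of order $60$, that a normalized nonsingular sandwich matrix forces both index sets to have size at least $2$ (a normalized matrix with a single row or column is automatically all-ones, hence singular unless the other dimension is also $1$, which would make $S$ a group), and that the trivial group cannot serve since then $S$ would be the rectangular band-like structure which is not an e.d.\ (indeed it has a zero in degenerate cases, or fails nonsingularity). Once those three facts are assembled, the multiplicativity $|S|=|G||\Lambda||I|$ finishes it.
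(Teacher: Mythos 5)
Your overall route is the paper's: apply Theorem~\ref{th:main_new} to see that the kernel $K=(G,\P,\Lambda,I)$ is an e.d., deduce via Theorem~\ref{th:main} that $\P$ is nonsingular and $G$ is an e.d.\ in $\LL_G$, note that normalization plus nonsingularity forces either $|\Lambda|=|I|=1$ or $|\Lambda|,|I|\geq 2$ with $G$ nontrivial, and then use the minimality of $A_5$ among nontrivial group e.d.'s to get $|S|\geq |K|=|G||\Lambda||I|\geq 60\cdot 2\cdot 2=240$. All of that matches the paper's proof.

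There is, however, one genuinely wrong step. In your Case~2 you exclude the possibility that $K$ is a group by arguing that ``a group has no proper ideals and $K\subsetneq S$ is a proper ideal.'' That is a non sequitur: $K$ being an ideal \emph{of $S$} says nothing about the ideals \emph{of $K$ itself}, and semigroups whose kernel is simultaneously a proper ideal and a group (homogroups) exist in abundance --- any semigroup with zero is an example, its kernel being the trivial group. If the homogroup case is not excluded, your argument only yields $|S|>|K|=|G|\geq 60$ in that case, not $240$. The paper closes exactly this hole with Corollary~\ref{cor:about_homogroups}: if $|\Lambda|=|I|=1$ then $K$ is a group, so $S$ is a homogroup, and an e.d.\ homogroup satisfies $S=Ker(S)$, i.e.\ $S$ is a group, contradicting the assumption that $S$ is a \emph{proper} e.d. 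With that citation substituted for your justification, your proof goes through and coincides with the paper's (which, incidentally, does not need the case split $S=K$ versus $S\neq K$: it disposes of $|\Lambda|=|I|=1$ once via Corollary~\ref{cor:about_homogroups} and then bounds $|S|\geq|K|$ directly).
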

\begin{proof}
Let $K=(G,\P,\Lambda,I)$ be the kernel of the semigroup $S$. By Theorem~\ref{th:main_new}, we have that 
\begin{enumerate}
\item the matrix $\P$ is nonsingular. It means that either $\P$ has at least two rows and columns or  $|\Lambda|=|I|=1$;
\item the group $G$ is an e.d. in the language $\LL_G$.
\end{enumerate}

If $|\Lambda|=|I|=1$ the kernel is a group, therefore $S$ becomes a homogroup. By Corollary~\ref{cor:about_homogroups}, we obtain $S=K$, and $S$ is not a proper e.d.

Thus, the matrix $\P$ contains at least two rows and columns. Therefore, the group $G$ is not trivial
(the triviality of $G$ makes $\P$ singular). 

Since the alternating group $A_5$ is a nontrivial e.d. of the minimal order, we obtain the next estimation:
\[|S|\geq|K|=|G||\Lambda||I|\geq|A_5||\Lambda||I|\geq 60\cdot 2\cdot 2=240.\]

Recall that we cannot improve the estimation above (see Example~\ref{ex:domain_240}).
\end{proof}

\section{Inner translations of ideals}
\label{sec:action_on_ideal}

Suppose a semigroup $S$ has an ideal $I$ and $\al\in S$. A map $l_\al(x)\colon I\to I$ ($r_\al(x)\colon I\to I$) is called a \textit{left (respectively, right) inner translation of the ideal $I$} if $l_\al(x)=\alpha x$ (respectively, $r_\al(x)=x\alpha$) for $x\in I$. Let $\al\sim_I \beta$ denote elements $\al,\beta\in S$ such that
\[
\al x=\beta x, \; \mbox{ and } x\al=x\beta
\]
for any $x\in I$. If $\al\sim_I \beta$ the elements $\al,\beta$ are called $I$-{\it equivalent}. The equivalence relation $\sim_I$ is \textit{trivial} if any equivalence class consists of a single element, i.e. a trivial equivalence relation $\sim_I$ coincides with the equality relation over a semigroup $S$.

\begin{remark}
Notice that the triviality of $\sim_I$ relation is close to the definition of a weakly reductive semigroup. Namely, a semigroup $S$ is weakly reductive iff the relation $\sim_S$ is trivial.
\end{remark}

\begin{lemma}
\label{l:zamena}
Suppose a semigroup $S$ has an ideal $I$ and $\al\sim_I\beta$. Then for any term $t(x,y)$ containing occurrences of the variable $y$ we have
\begin{equation}
t(\al,r)=t(\beta,r)
\label{eq:t(al,r)=t(beta,r)}
\end{equation} 
for all $r\in I$.
\end{lemma}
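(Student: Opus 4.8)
The plan is to induct on the structure of the term $t(x,y)$, viewing it as a word in the free product of the free monoid on $\{x,y\}$ with the constant set $S$, and using the hypothesis that $y$ occurs in $t$. The key observation is that since $y$ occurs in $t(x,y)$, after substituting $y=r\in I$ every relevant ``intermediate value'' in the evaluation of $t(\al,r)$ already lies in the ideal $I$, precisely because $I$ is an ideal and multiplying anything (on either side) by $r\in I$ lands back in $I$.

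More concretely, I would first write $t(x,y)$ in reduced form and locate an occurrence of $y$, splitting the term as $t(x,y)=u(x,y)\cdot y\cdot v(x,y)$ where $u$ is the (possibly empty) prefix up to the first occurrence of $y$. Then $t(\al,r)=u(\al,r)\cdot r\cdot v(\al,r)$. The factor $r\cdot v(\al,r)$ lies in $I$ since $I$ is an ideal, so the whole product equals $u(\al,r)\cdot\bigl(r\cdot v(\al,r)\bigr)$ with the second factor in $I$. Now I would argue by induction on the length of the prefix $u$: either $u$ is empty, in which case $t(\al,r)=r\cdot v(\al,r)$ does not involve $\al$ at all and the equality $t(\al,r)=t(\beta,r)$ is immediate; or $u(\al,r)=w(\al,r)\cdot c$ where $c$ is either a variable ($x$, giving factor $\al$, or $y$, giving factor $r$) or a constant from $S$. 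In the case the last symbol of $u$ is $x$, we get $t(\al,r)=w(\al,r)\cdot\al\cdot\bigl(r\cdot v(\al,r)\bigr)$; since $r\cdot v(\al,r)\in I$, the defining relation $\al\sim_I\beta$ gives $\al\cdot\bigl(r\cdot v(\al,r)\bigr)=\beta\cdot\bigl(r\cdot v(\al,r)\bigr)$, and we can then peel off and continue inductively on $w$, at each stage keeping an element of $I$ on the right. When the last symbol of $u$ is a constant $c\in S$ or the variable $y$ (value $r$), no replacement is needed at that position and we simply recurse. Iterating, every occurrence of $\al$ gets converted to $\beta$ one at a time, always legitimized by the fact that everything to its right multiplies out to an element of $I$.

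The main subtlety, and the step I would be most careful about, is making the induction bookkeeping precise: one must ensure that at each peeling step the product of all symbols strictly to the right of the symbol currently under consideration genuinely lies in $I$. This is true because that product always contains at least one factor equal to $r$ (namely the designated occurrence of $y$, which sits to the right of the whole prefix $u$), and $I$ absorbs multiplication on both sides; so no matter what prefix symbols we have already processed, the ``tail'' remains in $I$. Once this invariant is stated cleanly, the induction is routine and the relation $\al\sim_I\beta$ is invoked exactly once per occurrence of $\al$. Thus the equality $t(\al,r)=t(\beta,r)$ follows for all $r\in I$.
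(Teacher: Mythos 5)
There is a genuine gap. Your entire replacement mechanism uses only one half of the definition of $\al\sim_I\beta$, namely $\al z=\beta z$ for $z\in I$, applied to an occurrence of $x$ whose \emph{tail} (the product of all symbols strictly to its right) lies in $I$. That invariant holds only for occurrences of $x$ lying to the \emph{left} of the designated occurrence of $y$; for an occurrence of $x$ to the right of every occurrence of $y$, the tail contains no factor $r$ and need not lie in $I$ (it may even be empty). The simplest counterexample to your bookkeeping is $t(x,y)=yx$: here $u$ is empty, and your claim that then ``$t(\al,r)=r\cdot v(\al,r)$ does not involve $\al$ at all'' is false, since $t(\al,r)=r\al$. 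The equality $r\al=r\beta$ is true, but it follows from the \emph{other} half of the definition of $\sim_I$ (namely $z\al=z\beta$ for $z\in I$, applied with $z=r$), which you never invoke. The same problem arises for every occurrence of $x$ inside the suffix $v$, e.g.\ the second $x$ in $t(x,y)=xyx$ cannot be processed by your right-tail argument.

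The repair is the symmetric argument: for an occurrence of $x$ that has some occurrence of $y$ to its left, the product of all symbols strictly to its \emph{left} contains a factor $r$, hence lies in $I$, and $z\al=z\beta$ applies. This is exactly how the paper organizes the proof: it first treats $t=v(x)y^n$ by peeling the prefix $v$ from the right (your argument), then treats $t=y^nv(x)$ by the mirror-image peeling using $z\al=z\beta$, and finally writes a general term as $t=w_1(x)y^{n_1}\cdots w_m(x)y^{n_m}w_{m+1}(x)$ and inducts on $m$, using that the value of the initial segment $w_1(x)y^{n_1}\cdots w_m(x)$ lands in $I$ and can serve as the absorbing left factor for the final block $y^{n_m}w_{m+1}(x)$. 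Your prefix argument is essentially the paper's first case; you are missing the dual case and the step that splices the two together.
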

\begin{proof}
Let $t(x,y)$ be a term of the language $\LL_S$. The {\it length} $|t(x,y)|$ of a term $t(x,y)$ is the length of the word $t(x,y)$ is the alphabet $X\cup\{s|s\in S\}$. For example, $|xs_1y^2|=4$, $|x|=|s_2|=1$.

\begin{enumerate}
\item Let $t(x,y)=v(x)y^n$. We prove the statement of the lemma by induction on the length of $v(x)$. 

If $|v(x)|=1$, then $v(x)$ is either a constant $\c$ or $v(x)=x$. If $v(x)=\c$ the equality~(\ref{eq:t(al,r)=t(beta,r)}) obviously holds. If $v(x)=x$, by the condition of the lemma, we have $\al r=\beta r$ and obtain~(\ref{eq:t(al,r)=t(beta,r)}).

Assume that~(\ref{eq:t(al,r)=t(beta,r)}) holds for any term of the length less than $n$. Let $|v(x)|=n$. We have either $v(x)=v^\pr(x)\c$ or $v(x)=v^\pr(x)x$, where $|v^\pr(x)|=n-1$. 

If $v(x)=v^\pr(x)\c$, then we use $\c r^n\in I$ and apply the induction hypothesis:
\[
t(\al,r)=v^\pr(\al)(\c r^n)=v^\pr(\beta)(\c r^n)=t(\beta,r).
\]
For $v(x)=v^\pr(x)x$ the proof is similar.

\item If $t(x,y)=y^n v(x)$ the proof is similar to the reasonings of the previous case.

\item Consider the most general form of the term  $t(x,y)$:
\[
t(x,y)=w_1(x)y^{n_1}w_2(x)y^{n_2}\ldots w_m(x)y^{n_m}w_{m+1}(x),
\]
where the terms $w_1(x),w_{m+1}(x)$ may be empty.

Let us prove the statement of the lemma by the induction on $m$. If $m=1$, the term $t(x,y)$ is reduced to the terms from the previous cases.

Assume $t^\pr(\al,r)=t^\pr(\beta,r)=r^\pr\in I$, where
\[
t^\pr(x,y)=w_1(x)y^{n_1}w_2(x)y^{n_2}\ldots w_{m-1}(x)y^{n_{m-1}}w_m(x).
\]

Let us consider the term $s(x,y)=y^{n_m} w_{m+1}(x)$. As we proved above,
\[
s(\al,r)=s(\beta,r).
\]
Thus,
\[
t(\al,r)=r^{\pr}s(\al,r)=r^{\pr}s(\beta,r)=t(\beta,r),
\]
which proves~(\ref{eq:t(al,r)=t(beta,r)}). 

\end{enumerate}

\end{proof}

Notice that the statement of Lemma~\ref{l:zamena} fails for terms $t(x,y)$ which do not depend on the variable $y$. 

\begin{theorem}
\label{th:alpha_sim_beta}
If a semigroup $S$ is an e.d. in the language $\LL_S$, the equivalence relation $\sim_I$ is trivial for any ideal $I\subseteq S$. 
\end{theorem}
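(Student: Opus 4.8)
The strategy is a proof by contradiction that mirrors the structure of Lemmas~\ref{l:singular->not_ED_new} and~\ref{l:S-eq_dom->G-eq_dom_new}. Suppose $S$ is an e.d. but $\sim_I$ is nontrivial for some ideal $I$, so there exist distinct $\al,\beta\in S$ with $\al\sim_I\beta$. I would then use the e.d.\ property via Theorem~\ref{th:about_M}: there is a system $\Ss(x_1,x_2,x_3,x_4)$ over $\LL_S$ with solution set $\M_{sem}=\{(x_1,x_2,x_3,x_4)\mid x_1=x_2\text{ or }x_3=x_4\}$. The point $(\al,\beta,r,r)$ lies in $\M_{sem}$ for every $r\in I$ (since the second disjunct holds), while the point $(\al,\beta,r,r^\pr)$ with $r\ne r^\pr$ does \emph{not} lie in $\M_{sem}$. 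The plan is to pick an equation of $\Ss$ witnessing this failure and derive a contradiction from Lemma~\ref{l:zamena}.

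More precisely, since $(\al,\beta,r,r^\pr)\notin\M_{sem}$ for suitable $r\ne r^\pr$ in $I$, some equation $t(x_1,x_2,x_3,x_4)=s(x_1,x_2,x_3,x_4)$ of $\Ss$ fails at this point. I would first argue that, after specializing $x_1\mapsto\al$ and $x_2\mapsto\beta$, the resulting equation $t(\al,\beta,x_3,x_4)=s(\al,\beta,x_3,x_4)$ in the two variables $x_3,x_4$ must genuinely depend on $x_3$ and $x_4$ (otherwise it could not separate $(\al,\beta,r,r)$-type points where it must hold from the failing point, or — handled separately — it would be a constant equation either always true or always false, both impossible). The key reduction is that whenever an occurrence of the variable $x_1$ or $x_2$ sits \emph{not at the very start} of a term, Lemma~\ref{l:zamena} (applied with the ideal $I$, the pair $\al\sim_I\beta$, and treating the preceding product as landing in $I$) lets me replace $\al$ by $\beta$ without changing the value at any point of $I^n$; dually for occurrences not at the very end. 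So the only obstruction is terms that \emph{begin} (or \emph{end}) with $x_1$ or $x_2$.

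The heart of the argument is therefore to handle those boundary occurrences, and this is where I expect the main obstacle. The idea: since the equation must hold identically on all points $(\al,\beta,r,r)$ with $r\in I$ and $I$ is an ideal, the ``effective'' behavior of a leading $x_1$ or $x_2$ is only ever seen through left multiplication on elements of $I$ — but $\al$ and $\beta$ act identically on $I$ by hypothesis ($\al r=\beta r$ for all $r\in I$), and symmetrically on the right. Concretely, any term of $\Ss$ that contains $x_3$ or $x_4$ (which, as argued, the separating equation must) will, after substituting a value of $x_3$ or $x_4$ from $I$, have its leading $x_1$/$x_2$ occurrence immediately followed by an element of $I$ (namely the value, or the product of the value with what follows), so even the boundary occurrence is absorbed. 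Thus $t(\al,\beta,r,r')=t(\beta,\beta,r,r')$ and similarly for $s$; but $(\beta,\beta,r,r')$ has its first coordinate equal to its second, so it lies in $\M_{sem}$ and must satisfy every equation of $\Ss$ — contradicting that $t=s$ fails at $(\al,\beta,r,r')$.

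\textbf{Remaining care.} I would set this up cleanly by a case split on whether the failing equation, after the substitution $x_1\mapsto\al,x_2\mapsto\beta$, depends on $x_3$, on $x_4$, or on both, and by first disposing of equations not involving $x_1$ or $x_2$ at all (these hold at $(\al,\beta,r,r')$ iff they hold at $(\beta,\beta,r,r')$ trivially, hence cannot be the separating equation). The delicate point is ensuring that a substituted value of $x_3$ or $x_4$ really does sit next to the boundary $x_1$/$x_2$ occurrence — if the term is, say, $x_1 x_2 x_3\cdots$ then the $\al$ from $x_1$ is followed by $\beta$, not yet an element of $I$, so one applies the absorption only once the first $I$-valued variable is reached; Lemma~\ref{l:zamena} as stated handles exactly a prefix $w_1(x)$ of arbitrary $x_1,x_2$-content followed by an $I$-valued variable power, which is precisely this situation with $y=x_3$ or $y=x_4$. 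This matches the hypothesis ``$t(x,y)$ contains occurrences of the variable $y$'' in Lemma~\ref{l:zamena}, so the lemma applies verbatim after collapsing $\{x_3,x_4\}$ to a single variable taking values in $I$.
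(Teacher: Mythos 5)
Your overall strategy is the paper's: assume $\al\sim_I\beta$ with $\al\neq\beta$, take a system $\Ss$ defining $\M_{sem}$, pick an equation failing at $(\al,\beta,r,r')$, and use Lemma~\ref{l:zamena} to swap the second coordinate inside $\{\al,\beta\}$ so as to land in $\M_{sem}$. But there is a genuine gap at the step ``Thus $t(\al,\beta,r,r')=t(\beta,\beta,r,r')$ and similarly for $s$.'' Lemma~\ref{l:zamena} applies to a side of the equation only if that side contains an occurrence of an $I$-valued variable ($x_3$ or $x_4$), and you justify this by arguing that the specialized equation ``must genuinely depend on $x_3$ and $x_4$.'' That inference is invalid: semantic dependence of the \emph{equation} does not give a syntactic occurrence of $x_3$ or $x_4$ in \emph{each side}. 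The equations $t(x_1,x_2)=s(x_3,x_4)$ and $t(x_1,x_2)=s(x_1,x_2,x_3,x_4)$ are perfectly possible members of $\Ss$, separate $(\al,\beta,r,r)$ from $(\al,\beta,r,r')$, and yet have a side to which Lemma~\ref{l:zamena} simply does not apply --- and for that side $t(\al,\beta)=t(\beta,\beta)$ is exactly what you may \emph{not} assume, since $\al$ and $\beta$ only agree through their action on $I$. The paper devotes three separate cases to these configurations and resolves them not by a one-shot replacement but by chaining evaluations through several auxiliary points of $\M_{sem}$: e.g.\ for $t(x_1,x_2)=s(x_1,x_2,x_3,x_4)$ it uses $(\al,\al,r_1,r_1)$, $(\al,\beta,r_1,r_1)$ and $(\al,\al,r_1,r_2)$, applying Lemma~\ref{l:zamena} only to the side containing $x_3,x_4$, to first deduce $t(\al,\al)=t(\al,\beta)$ and only then transport the value to $(\al,\beta,r_1,r_2)$. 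This chaining idea is missing from your proposal and cannot be recovered from the absorption argument alone.

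Two smaller points. First, your argument tacitly requires two distinct elements $r\neq r'$ in $I$; if $I$ is a singleton its element is a zero of $S$ and one must instead invoke Corollary~\ref{cor:zero} (as the paper does) to exclude that case. Second, your worry about ``boundary occurrences'' of $x_1,x_2$ at the start or end of a term is a red herring: Lemma~\ref{l:zamena} already covers terms of the form $w_1(x)y^{n_1}\cdots w_m(x)y^{n_m}w_{m+1}(x)$ with arbitrary leading and trailing $x$-blocks, so the only relevant hypothesis is the presence of at least one $I$-valued variable in the term --- which is precisely the hypothesis your proposal fails to secure for both sides.
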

\begin{proof}
Since the theorem is obviously holds for trivial semigroup, further we put $|S|>1$.

Let $X$ denote the set of four variables $\{x_1,x_2,x_3,x_4\}$.

If $I=\{r\}$ the element $r$ is the zero of $S$. By Corollary~\ref{cor:zero}, $S$ is not an e.d. Let $r_1,r_2$ be two distinct elements of the ideal $I$.

Assume the converse: there exist distinct elements $\alpha,\beta\in S$ with $\al\sim_I\beta$.

By the condition, the set $\M_{sem}=\{(x_1,x_2,x_3,x_4)|x_1=x_2\mbox{ or }x_3=x_4\}$ is the solution set of a system $\Ss(X)$, and an equation $t(X)=s(X)\in\Ss$ is not satisfied by the point $(\al,\beta,r_1,r_2)\notin \M_{sem}$.

Let $\Var(t)$ be the set of variabless occurring in a term $t$.

For the equation $t(X)=s(X)$ we have one of the following conditions:
\begin{enumerate}
\item a variable $x_i\in X$ does not occur in the equation;
\item all intersections $\Var(t)\cap\{x_1,x_2\}$, $\Var(s)\cap\{x_1,x_2\}$, $\Var(t)\cap\{x_3,x_4\}$, $\Var(s)\cap\{x_3,x_4\}$ are nonempty;
\item $\Var(t)\cap\{x_1,x_2\}\neq\emptyset$, $\Var(s)\cap\{x_1,x_2\}\neq\emptyset$, $\Var(t)\cap\{x_3,x_4\}=\emptyset$ , $\Var(s)\cap\{x_3,x_4\}\neq\emptyset$;
\item $\Var(t)\cap\{x_3,x_4\}\neq\emptyset$, $\Var(s)\cap\{x_3,x_4\}\neq\emptyset$, $\Var(t)\cap\{x_1,x_2\}\neq\emptyset$ , $\Var(s)\cap\{x_1,x_2\}=\emptyset$;
\item one part of the equation contains only the variables $\{x_1,x_2\}$, and the another part contains only $\{x_3,x_4\}$;
\item one of the parts of the equation does not contain any variable.
\end{enumerate}

Let us consider all types of $t(X)=s(X)$.
\begin{enumerate}
\item Without loss of generality one can assume that $t(X)=s(X)$ does not contain the occurrences of $x_4$, i.e. $t(x_1,x_2,x_3)=s(x_1,x_2,x_3)$. Since $(\al,\beta,r_1,r_1)\in\M_{sem}$, we have $t(\al,\beta,r_1)=s(\al,\beta,r_1)$. However the point $(\al,\beta,r_1,r_2)$ does not satisfy the equation $t(X)=s(X)$, and we obtain the contradiction $t(\al,\beta,r_1)\neq s(\al,\beta,r_1)$.

\item By Lemma~\ref{l:zamena} we have:
\[
t(\al,\al,r_1,r_2)=t(\al,\beta,r_1,r_2),\; s(\al,\al,r_1,r_2)=s(\al,\beta,r_1,r_2).
\] 
Since $(\al,\al,r_1,r_2)\in\M_{sem}$, we have $t(\al,\al,r_1,r_2)=s(\al,\al,r_1,r_2)$. Therefore,
\[
t(\al,\beta,r_1,r_2)=s(\al,\beta,r_1,r_2),
\]
which contradicts the choice of the equation $t(X)=s(X)$.
\item Let $t(X)=s(X)$ be $t(x_1,x_2)=s(x_1,x_2,x_3,x_4)$.
By Lemma~\ref{l:zamena}, we have:
\[
s(\al,\al,r_1,r_1)=s(\al,\beta,r_1,r_1).
\] 
Since $(\al,\al,r_1,r_1),(\al,\beta,r_1,r_1)\in\M_{sem}$, we obtain 
\[
t(\al,\al)=s(\al,\al,r_1,r_1),\; t(\al,\beta)=s(\al,\beta,r_1,r_1),
\]
and $t(\al,\al)=t(\al,\beta)$.

Since $(\al,\al,r_1,r_2)\in\M_{sem}$, Lemma~\ref{l:zamena} gives us the equalities
\[
t(\al,\al)=s(\al,\al,r_1,r_2)=s(\al,\beta,r_1,r_2).
\]
Therefore,
\[
t(\al,\beta)=s(\al,\beta,r_1,r_2),
\]
which contradicts the choice of the equation $t(X)=s(X)$.

\item Suppose the equation $t(X)=s(X)$ is $t(x_1,x_2,x_3,x_4)=s(x_3,x_4)$. For the point  $(\al,\al,r_1,r_2)\in\M_{sem}$ we have
\[t(\al,\al,r_1,r_2)=s(r_1,r_2).\]
By Lemma~\ref{l:zamena}, we obtain
\[t(\al,\al,r_1,r_2)=t(\al,\beta,r_1,r_2),\]
therefore,
\[t(\al,\beta,r_1,r_2)=s(r_1,r_2),\]
which contradicts the choice of the the equation $t(X)=s(X)$.

\item Let the equation $t(X)=s(X)$ be $t(x_1,x_2)=s(x_3,x_4)$. For the points $(\al,\al,r_1,r_1),(\al,\beta,r_1,r_1),(\al,\al,r_1,r_2)\in\M_{sem}$ we have the equalities 
\[
t(\al,\al)=s(r_1,r_1),\; t(\al,\beta)=s(r_1,r_1),\;t(\al,\al)=s(r_1,r_2),
\] 
therefore, $t(\al,\beta)=s(r_1,r_2)$, which contradicts the choice of the equation $t(X)=s(X)$.

\item Assume that $t(X)=s(X)$ is $t(x_1,x_2,x_3,x_4)=\c$, where $\c\in S$.
According to Lemma~\ref{l:zamena} the equality $t(\al,\al,r_1,r_2)=\c$ implies $t(\al,\beta,r_1,r_2)=\c$. The last equality contradicts the choice of the equation $t(X)=s(X)$. 

\end{enumerate}

\end{proof}

\begin{corollary}
\label{cor:S>l^2l}
Let $I$ be a finite ideal of a semigroup $S$, $|I|=l$. If $|S|>l^{2l}$ the semigroup $S$ is not an e.d. in the language $\LL_S$.
\end{corollary}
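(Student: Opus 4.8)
The plan is to deduce the statement from Theorem~\ref{th:alpha_sim_beta} by a pigeonhole count on the inner translations introduced in Section~\ref{sec:action_on_ideal}. Since an e.d. must have trivial $\sim_I$ by that theorem, it suffices to show that the cardinality hypothesis $|S|>l^{2l}$ forces $\sim_I$ to be non-trivial, i.e. to exhibit two \emph{distinct} elements $\al,\beta\in S$ with $\al\sim_I\beta$; then Theorem~\ref{th:alpha_sim_beta} immediately gives that $S$ is not an e.d.

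First I would record that, because $I$ is an ideal, for every $\al\in S$ both the left inner translation $l_\al$ and the right inner translation $r_\al$ are well-defined maps $I\to I$ (we have $\al x\in I$ and $x\al\in I$ for all $x\in I$). This yields a map
\[
\Phi\colon S\longrightarrow I^I\times I^I,\qquad \Phi(\al)=(l_\al,r_\al),
\]
where $I^I$ denotes the set of all self-maps of $I$. Next comes the count: $|I^I|=l^l$, hence $|I^I\times I^I|=l^{2l}$. Since $|S|>l^{2l}=|I^I\times I^I|$, the map $\Phi$ cannot be injective, so there exist distinct $\al,\beta\in S$ with $l_\al=l_\beta$ and $r_\al=r_\beta$. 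Unwinding the definitions, this says exactly $\al x=\beta x$ and $x\al=x\beta$ for all $x\in I$, that is, $\al\sim_I\beta$ with $\al\ne\beta$. Thus $\sim_I$ is non-trivial, and Theorem~\ref{th:alpha_sim_beta} finishes the argument.

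I do not expect any genuine obstacle here: the proof is a one-line pigeonhole once the encoding $\al\mapsto(l_\al,r_\al)$ is set up, and all the substantive work has already been done in Theorem~\ref{th:alpha_sim_beta}. The only minor point worth a sentence is that an ideal is non-empty, so $l\ge 1$ and the bound $l^{2l}$ is meaningful (for $l=1$ it recovers the fact that a semigroup of order $>1$ with a one-element ideal has a zero and hence, by Corollary~\ref{cor:zero}, is not an e.d.).
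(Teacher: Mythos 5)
Your proof is correct and follows essentially the same route as the paper: a pigeonhole count of the pairs $(l_\al,r_\al)$ of inner translations, bounding the number of $\sim_I$-classes by $l^l\cdot l^l=l^{2l}$, and then invoking Theorem~\ref{th:alpha_sim_beta}. No differences worth noting.
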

\begin{proof}
Let us show that the condition $|S|>l^{2l}$ implies the existence of two distinct elements $\al,\beta$ with $\al\sim_I\beta$.

Indeed, any left inner translation $l_\al$ of the ideal $I$ is a function $l_\al\colon I\to I$. The number of different mappings over the set $I$ equals $l^l$. Similarly, the number of different right inner translations of $I$ is also equal to $l^l$. Hence, the number of $\sim_I$-classes is not more than  $l^l\cdot l^l=l^{2l}$. 

Thus, the equality $|S|>l^{2l}$ implies the existence of two distinct elements $\al,\beta$ with $\al\sim_I\beta$. By Theorem~\ref{th:alpha_sim_beta}, $S$ is not an e.d. in the language $\LL_S$.
\end{proof}

\begin{corollary}
\label{cor:about_infinite_semigroups}
Any infinite semigroup $S$ with a finite ideal $I$ is not an e.d. in the language $\LL_S$.
\end{corollary}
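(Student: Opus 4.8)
The plan is to derive this immediately from Corollary~\ref{cor:S>l^2l}. Set $l=|I|$, which is finite by hypothesis. Then $l^{2l}$ is a finite natural number, whereas $|S|$ is infinite, so in particular $|S|>l^{2l}$. Corollary~\ref{cor:S>l^2l} then applies verbatim and yields that $S$ is not an e.d. in the language $\LL_S$.

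If one prefers to avoid invoking the quantitative bound and argue directly, the same idea works at the level of Theorem~\ref{th:alpha_sim_beta}: the map sending $\al\in S$ to the pair of inner translations $(l_\al,r_\al)\in I^I\times I^I$ has finite codomain (of size $l^{2l}$), so it cannot be injective on an infinite $S$; thus there exist distinct $\al,\beta\in S$ inducing the same left and right inner translations of $I$, i.e.\ $\al\sim_I\beta$ with $\al\neq\beta$. Hence $\sim_I$ is nontrivial, and Theorem~\ref{th:alpha_sim_beta} gives that $S$ is not an e.d.

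There is essentially no obstacle here; the only thing to be mildly careful about is the degenerate case $I=\{r\}$ (so $l=1$), but this is already handled inside Corollary~\ref{cor:S>l^2l} / Theorem~\ref{th:alpha_sim_beta} via Corollary~\ref{cor:zero} (a singleton ideal is a zero, and a nontrivial semigroup with a zero is not an e.d.), so no separate treatment is needed. I would therefore present the proof as a one-line deduction from Corollary~\ref{cor:S>l^2l}.

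\begin{proof}
Put $l=|I|<\infty$. Since $S$ is infinite, $|S|>l^{2l}$, so by Corollary~\ref{cor:S>l^2l} the semigroup $S$ is not an e.d. in the language $\LL_S$.
\end{proof}
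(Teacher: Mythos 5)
Your proof is correct and matches the paper's intent exactly: the corollary is stated right after Corollary~\ref{cor:S>l^2l} as an immediate consequence, since $|S|$ infinite and $l=|I|$ finite gives $|S|>l^{2l}$. The one-line deduction you propose is precisely what the paper relies on.
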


\bigskip

Let us improve the estimation of Corollary~\ref{cor:S>l^2l} for a semigroup $S$ with the finite kernel $K$.

\begin{lemma}
\label{l:about_action}
Suppose a semigroup $S$ has the finite kernel $K=(G,\P,\Lambda,I)$. Then, for any $\alpha\in S$ there exist an element $g_\alpha\in G$ and mappings $\Lambda_\alpha\colon\Lambda\to\Lambda$, $I_\al\colon I\to I$ such that
\begin{enumerate}
\item $\alpha(\lambda,1,1)=(\Lambda_\al(\lambda),g_\al p_{I_\al(1)\lambda},1)$,
\item $(1,1,i)\alpha=(1,p_{i\Lambda_\al(1)}g_\al,I_\al(i))$,
\item $\al(\lambda,g,i)=(\Lambda_\al(\lambda),g_\al p_{I_\al(1)\lambda}g,i)$,
\item $(\lambda,g,i)\al=(\lambda,gp_{i\Lambda_\al(1)}g_\al,I_\al(i))$.
\end{enumerate}
\end{lemma}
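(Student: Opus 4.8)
The plan is to reduce everything to the multiplication formulas already established in Lemma~\ref{l:properties_of_multiplication}, exploiting the fact that in a completely simple semigroup $K=(G,\P,\Lambda,I)$ every element factors nicely, e.g. $(\lambda,g,i)=(\lambda,1,1)(1,g,1)(1,1,i)$, and that left/right multiplication by $\alpha\in S$ acts within left, resp. right, ideals of $K$. First I would fix $\alpha\in S$ and analyze the product $\alpha(\lambda,1,1)$. Since $L_1$ is a left ideal and $(\lambda,1,1)\in S$ lies in the kernel, $\alpha(\lambda,1,1)$ lies in the same minimal left ideal as $(\lambda,1,1)$, hence has second index $1$ and first index depending only on $\lambda$ (and $\alpha$); call that first index $\Lambda_\alpha(\lambda)$, so $\alpha(\lambda,1,1)=(\Lambda_\alpha(\lambda),h_{\alpha,\lambda},1)$ for some group element $h_{\alpha,\lambda}\in G$. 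Symmetrically, $(1,1,i)\alpha=(1,h'_{\alpha,i},I_\alpha(i))$ with $I_\alpha\colon I\to I$. That these maps $\Lambda_\alpha$, $I_\alpha$ are well defined as functions $\Lambda\to\Lambda$ and $I\to I$ is immediate from the Rees structure; I will not belabour it.

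The substantive step is to pin down the group coordinates $h_{\alpha,\lambda}$ and $h'_{\alpha,i}$ in terms of a single element $g_\alpha\in G$ and the entries of $\P$. Here I would set $g_\alpha$ to be the group coordinate appearing in Lemma~\ref{l:properties_of_multiplication}, i.e. $\alpha(1,1,1)=(\lambda_\alpha,g_\alpha,1)$ and $(1,1,1)\alpha=(1,g_\alpha,i_\alpha)$, so in the present notation $\Lambda_\alpha(1)=\lambda_\alpha$, $I_\alpha(1)=i_\alpha$. To get $h_{\alpha,\lambda}$, write $\alpha(\lambda,1,1)=\alpha(1,1,1)(1,1,1)(\lambda,1,1)$ — wait, that is not how the Rees product composes; instead use $\alpha(\lambda,1,1)=\bigl(\alpha(1,1,i_0)\bigr)\cdot\text{(something)}$, or more cleanly compute $\alpha(\lambda,1,1)$ by first writing $(\lambda,1,1)=(1,1,i_0)(\lambda,1,1)$? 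That fails too since the left index is determined by the left factor. The correct route: use that $(\lambda,1,1)=(1,1,1)(\lambda,1,1)$ is false (left index is $1$ on the right side). So instead I compute $\alpha(\lambda,g,i)$ directly via $\alpha(\lambda,g,i)=\bigl(\alpha(\lambda,1,1)\bigr)(1,g,i)$, using that $(\lambda,1,1)(1,g,i)=(\lambda,p_{1\cdot 1}\,g,i)=(\lambda,g,i)$ since $\P$ is normalized and the relevant entry is $p_{I_\alpha(1),\,?}$ — more precisely I will multiply out $(\Lambda_\alpha(\lambda),h_{\alpha,\lambda},1)(1,g,i)=(\Lambda_\alpha(\lambda),h_{\alpha,\lambda}\,p_{1\cdot 1}\,g,i)$; normalization gives $p_{1\cdot 1}=1$, yielding item (3) with $h_{\alpha,\lambda}$ in place of $g_\alpha p_{I_\alpha(1)\lambda}$. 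It then remains to identify $h_{\alpha,\lambda}=g_\alpha p_{I_\alpha(1)\lambda}$; I would do this by evaluating $\alpha(\lambda,1,1)$ a second way, namely $\alpha(\lambda,1,1)=\alpha\bigl((1,1,I_\alpha(1)? )\cdots\bigr)$ — concretely, $\alpha(1,1,i)(1\!\!\!\phantom{)}\ldots$; the cleanest identity is $\alpha(\lambda,1,1)=\bigl(\alpha(1,1,1)\bigr)\bigl((1,1,1)(\lambda,1,1)\bigr)$, which is wrong, so I will instead use associativity in the form $\alpha(\lambda,1,1)=\alpha\cdot(1,1,i^{\ast})\cdot(\lambda,1,1)$ for any $i^{\ast}$, since $(1,1,i^{\ast})(\lambda,1,1)=(1,p_{i^{\ast}\lambda},1)$ and hence $\alpha(\lambda,1,1)=\bigl((1\text{-indexed part of }\alpha(1,1,i^{\ast}))\bigr)(1,p_{i^{\ast}\lambda},1)$. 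Choosing $i^{\ast}=i_\alpha=I_\alpha(1)$ and using $(1,1,1)\alpha=(1,g_\alpha,i_\alpha)$ to compute $\alpha(1,1,i_\alpha)$, the group coordinate that emerges is exactly $g_\alpha p_{I_\alpha(1)\lambda}$. The same bookkeeping on the right gives item (4) and item (2).

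I expect the genuine obstacle to be purely notational: keeping straight which index of a Rees triple is affected by left versus right multiplication, and threading the normalization conditions ($p_{1\mu}=p_{i1}=1$) through so that the "free" entries $p_{I_\alpha(1)\lambda}$ and $p_{i\Lambda_\alpha(1)}$ land in the stated positions. There is no hard mathematics — everything is a finite computation inside a completely simple semigroup plus the observation that multiplication by a fixed $\alpha$ respects the minimal left/right ideal decomposition — but the argument must be organized so that the single group element $g_\alpha$ (inherited from Lemma~\ref{l:properties_of_multiplication}) and the two maps $\Lambda_\alpha$, $I_\alpha$ do all the work simultaneously for all four formulas, rather than introducing a separate unknown for each $\lambda$ and $i$. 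Once item (3) and item (4) are proved for all $g\in G$, items (1) and (2) follow by specializing $g=1$ and, in (1), $i=1$, resp. in (2), $\lambda=1$, together with $p_{I_\alpha(1)\cdot 1}=1$ or $p_{1\cdot\Lambda_\alpha(1)}=1$ — except that the displayed (1) retains $p_{I_\alpha(1)\lambda}$, so in fact (1) is just (3) with $g=1,i=1$ before applying normalization to that entry, i.e. one does \emph{not} simplify $p_{I_\alpha(1)\lambda}$ in (1); I will present (3) first and read off (1), (2), (4) as consequences.
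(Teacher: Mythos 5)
Your skeleton is the right one — and is in fact the paper's: use that $L_1$, $R_1$ are one-sided ideals to write $\alpha(\lambda,1,1)=(\Lambda_\alpha(\lambda),h_{\alpha,\lambda},1)$ and $(1,1,i)\alpha=(1,h'_{\alpha,i},I_\alpha(i))$, then get items (3),(4) from (1),(2) via $\alpha(\lambda,g,i)=\bigl(\alpha(\lambda,1,1)\bigr)(1,g,i)$ and normalization. But the one substantive step, identifying $h_{\alpha,\lambda}=g_\alpha p_{I_\alpha(1)\lambda}$, rests on the identity you finally commit to,
\[
\alpha(\lambda,1,1)=\alpha\,(1,1,i^{\ast})\,(\lambda,1,1),
\]
and this identity is false. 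As you yourself compute, $(1,1,i^{\ast})(\lambda,1,1)=(1,p_{i^{\ast}\lambda},1)\neq(\lambda,1,1)$, so the right-hand side equals $\alpha(1,p_{i^{\ast}\lambda},1)=(\lambda_\alpha,g_\alpha p_{i^{\ast}\lambda},1)$ by Lemma~\ref{l:properties_of_multiplication}(2). Its first Rees index is the constant $\lambda_\alpha=\Lambda_\alpha(1)$, while the first index of $\alpha(\lambda,1,1)$ is $\Lambda_\alpha(\lambda)$, which need not be constant in $\lambda$: take $S=K^1$ with $K$ a two-element left-zero semigroup, where $1\cdot(\lambda,1,1)=(\lambda,1,1)$ and $\Lambda_1$ is the identity map. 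So the two sides are genuinely different elements of $K$, and you cannot read off the group coordinate of one from the other; that their group coordinates happen to agree is exactly the assertion to be proved, not a free observation.

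The missing idea is to multiply on the \emph{left} by the idempotent $(1,1,1)$ and use associativity there:
\[
(1,1,1)\bigl(\alpha(\lambda,1,1)\bigr)=\bigl((1,1,1)\alpha\bigr)(\lambda,1,1).
\]
The left side is $(1,1,1)(\Lambda_\alpha(\lambda),h_{\alpha,\lambda},1)=(1,h_{\alpha,\lambda},1)$, since normalization gives $p_{1\,\Lambda_\alpha(\lambda)}=1$; that is, left multiplication by $(1,1,1)$ forgets the first index but preserves the group coordinate. The right side is $(1,g_\alpha,i_\alpha)(\lambda,1,1)=(1,g_\alpha p_{i_\alpha\lambda},1)$ by Lemma~\ref{l:properties_of_multiplication}(3) and the Rees multiplication. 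Comparing group coordinates yields $h_{\alpha,\lambda}=g_\alpha p_{I_\alpha(1)\lambda}$ while leaving $\Lambda_\alpha(\lambda)$ untouched; the mirror computation with $(1,1,1)$ on the right gives item (2). This is precisely the paper's argument, and with this substitution the rest of your outline goes through.
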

\begin{proof}
Clearly, $I_\al(1)=i_\al$, $\Lambda_\al(1)=\lambda_\al$, where the indexes $i_\al,\lambda_\al$ are defined in Lemma~\ref{l:properties_of_multiplication}.

Let us prove the statements of the lemma.

\begin{enumerate}
\item Since $L_1$ is a left ideal, $\al(\lambda,1,1)\in L_1$. Then $\al(\lambda,1,1)=(\Lambda_\al(\lambda),G_\al(\lambda),1)$, where $G_\al(\lambda)\colon\Lambda\to G$ is a map depending on $\lambda$.

We have
\[
(1,1,1)(\al(\lambda,1,1))=(1,1,1)(\Lambda_\al(\lambda),G_\al(\lambda),1)=(1,G_\al(\lambda),1).
\] 
Using Lemma~\ref{l:properties_of_multiplication},
\[
((1,1,1)\al)(\lambda,1,1)=(1,g_\al,i_\al)(\lambda,1,1)=(1,g_\al p_{i_\al\lambda},1)=(1,g_\al p_{I_\al(1)\lambda},1),
\] 
we obtain $G_\al(\lambda)=g_\al p_{I_\al(1)\lambda}$.

\item Since $R_1$ is a right ideal, $(1,1,i)\al\in R_1$. Then $(1,1,i)\al=(1,G_\al^\pr(i),I_\al(i))$, where $G_\al^\pr\colon I\to G$ is map depending on $i$.

We have
\[
((1,1,i)\al)(1,1,1)=(1,G_\al^\pr(i),I_\al(i))(1,1,1)=(1,G_\al(i)^\pr,1).
\] 
On the other hand,
\[
(1,1,i)(\al(1,1,1))=(1,1,i)(\lambda_\al,g_\al,1)=(1,p_{i\lambda_\al}g_\al,1)=(1,p_{i\Lambda_\al(1)}g_\al,1),
\] 
thus $G_\al^\pr(i)=g_\al p_{i\Lambda_\al(1)}$.
\item
\[
\al(\lambda,g,i)=\al(\lambda,1,1)(1,g,i)=(\Lambda_\al(\lambda),g_\al p_{I_\al(1)\lambda},1)(1,g,i)=(\Lambda_\al(\lambda),g_\al p_{I_\al(1)\lambda}g,i),
\]

\item
\[
(\lambda,g,i)\al=(\lambda,g,1)(1,1,i)\al=(\lambda,g,1)(1,p_{i\Lambda_\al(1)}g_\al,I_\al(i))=(\lambda,gp_{i\Lambda_\al(1)}g_\al,I_\al(i)).
\]

\end{enumerate}
\end{proof}

\begin{theorem}
Suppose a semigroup $S$ has the finite kernel $K=(G,\P,\Lambda,I)$. If $|S|>|G||\Lambda|^{|\Lambda|}|I|^{|I|}$ the semigroup $S$ is not an e.d. in the language $\LL_S$.
\end{theorem}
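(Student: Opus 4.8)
The plan is to combine Lemma~\ref{l:about_action} with Theorem~\ref{th:alpha_sim_beta} via a counting argument, exactly as Corollary~\ref{cor:S>l^2l} combines the naive count of inner translations with Theorem~\ref{th:alpha_sim_beta}, but using the finer parametrization of the inner translations of the kernel supplied by Lemma~\ref{l:about_action}. Since the kernel $K$ is an ideal of $S$, it suffices by Theorem~\ref{th:alpha_sim_beta} to exhibit two distinct elements $\al,\beta\in S$ with $\al\sim_K\beta$.

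The key step is to read off from Lemma~\ref{l:about_action} that the pair of inner translations of $K$ induced by an element $\al\in S$ is completely determined by the triple $(g_\al,\Lambda_\al,I_\al)\in G\times\Lambda^{\Lambda}\times I^{I}$. Indeed, item~(3) of Lemma~\ref{l:about_action} expresses the left inner translation $l_\al(\lambda,g,i)=\al(\lambda,g,i)=(\Lambda_\al(\lambda),g_\al p_{I_\al(1)\lambda}g,i)$ purely in terms of $g_\al$, $\Lambda_\al$ and $I_\al(1)$, while item~(4) expresses the right inner translation $r_\al(\lambda,g,i)=(\lambda,g,i)\al=(\lambda,gp_{i\Lambda_\al(1)}g_\al,I_\al(i))$ purely in terms of $g_\al$, $\Lambda_\al(1)$ and $I_\al$. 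Hence if $\al,\beta\in S$ satisfy $g_\al=g_\beta$, $\Lambda_\al=\Lambda_\beta$ and $I_\al=I_\beta$, then $\al x=\beta x$ and $x\al=x\beta$ for every $x\in K$, that is, $\al\sim_K\beta$.

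Now count the possible triples: there are $|G|$ choices for $g_\al$, at most $|\Lambda|^{|\Lambda|}$ maps $\Lambda_\al\colon\Lambda\to\Lambda$, and at most $|I|^{|I|}$ maps $I_\al\colon I\to I$, so the map $\al\mapsto(g_\al,\Lambda_\al,I_\al)$ has image of size at most $|G||\Lambda|^{|\Lambda|}|I|^{|I|}$. If $|S|>|G||\Lambda|^{|\Lambda|}|I|^{|I|}$, the pigeonhole principle yields distinct $\al,\beta\in S$ mapping to the same triple; by the previous paragraph $\al\sim_K\beta$, so the relation $\sim_K$ is not trivial. Since $K$ is an ideal of $S$, Theorem~\ref{th:alpha_sim_beta} (contrapositive) shows $S$ is not an e.d.\ in the language $\LL_S$.

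There is essentially no deep obstacle here; the one point that needs care is verifying that a single triple $(g_\al,\Lambda_\al,I_\al)$ simultaneously pins down both the left and the right inner translation of $K$ — and this is precisely the content of parts~(3) and~(4) of Lemma~\ref{l:about_action}, once one notes that $I_\al(1)$ and $\Lambda_\al(1)$ are themselves recorded by $I_\al$ and $\Lambda_\al$. It is also worth remarking that the bound is exactly $|G||\Lambda|^{|\Lambda|}|I|^{|I|}$ rather than the crude $|K|^{2|K|}=(|G||\Lambda||I|)^{2|G||\Lambda||I|}$ coming from Corollary~\ref{cor:S>l^2l}, the improvement being due to the rigid Rees-matrix structure of $K$ which forces inner translations to act on the $G$-coordinate only by a single left or right multiplication.
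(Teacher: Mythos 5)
Your proof is correct and follows exactly the paper's own argument: parametrize the inner translations of $K$ by the triple $(g_\al,\Lambda_\al,I_\al)$ via Lemma~\ref{l:about_action}, count at most $|G||\Lambda|^{|\Lambda|}|I|^{|I|}$ such triples, and apply the pigeonhole principle together with Theorem~\ref{th:alpha_sim_beta}. The only difference is that you spell out the verification that the triple determines both the left and right translations, which the paper leaves implicit.
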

\begin{proof}
According to Lemma~\ref{l:about_action}, any inner translations of the kernel $K$ is defined by an element $g_\al\in G$ and mappings $\Lambda_\alpha\colon\Lambda\to\Lambda$, $I_\al\colon I\to I$. Therefore, there do not exist more than $|G||\Lambda|^{|\Lambda|}|I|^{|I|}$ different inner translations of the kernel.

Thus, the inequality $|S|>|G||\Lambda|^{|\Lambda|}|I|^{|I|}$ implies an existence of two different elements $\al,\beta\in S$ with the same translation of the kernel. By Theorem~\ref{th:alpha_sim_beta}, we obtain the statement of the theorem.
\end{proof}

\begin{example}
Let $S_{240}$ be a finite simple semigroup defined in Example~\ref{ex:domain_240}. Therefore, any equational domain with the kernel isomorphic to $S_{240}$ contains at most $60\cdot 2^2\cdot 2^2=960$ elements.  
\end{example}

\section{Semigroups with finite ideals}
\label{sec:criterion}

Let $K=(G,\P,\Lambda,I)$ be the finite kernel of a semigroup $S$ and $M\subseteq S^n$. By $\T(M,\Gamma)$ denote the set of all terms of the language $\LL_S$ in variables $x_1,x_2,\ldots,x_n$ such that
\begin{enumerate}
\item all constants of a term $t(X)\in\T(M,\Gamma)$ belong to the kernel $K$;
\item the value $t(P)$ of $t(X)\in\T(M,\Gamma)$ belong to the subgroup $\Gamma$ defined by formula~(\ref{eq:Gamma}) for all $P\in M$.
\end{enumerate}
For example,
\[
t(x)=(1,g,2)x(3,h,1)\in \T(S,\Gamma),
\]
\[
s(x,y)=(1,g,1)x^2(3,h,4)y(2,f,1)\T(S^2,\Gamma).
\]

\begin{lemma}
\label{l:exists_dist_term_new}
Suppose a semigroup $S$ has the finite kernel $K=(G,\P,\Lambda,I)$, where the matrix $\P$ is nonsingular and the equivalence relation $\sim_K$ is trivial. Then for any pair of distinct elements $\al,\beta\in S$ there exists a term $t(x)\in\T(S,\Gamma)$ with $t(\al)\neq t(\beta)$. 
\end{lemma}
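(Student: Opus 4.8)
The plan is to use the hypotheses in a two-step reduction: first handle the case where $\alpha$ and $\beta$ act differently on the kernel $K$ already at the level of the "Rees coordinates" of a single product, and then handle the case where $\alpha,\beta$ agree on all such coordinates. Since $\sim_K$ is trivial and $\alpha\neq\beta$, there is some $r\in K$ with $\alpha r\neq r\beta$ wait—rather, with $\alpha r\neq \beta r$ or $r\alpha\neq r\beta$. Say $\alpha r\neq\beta r$ (the other case is symmetric, using a term ending, not beginning, with $x$). Writing $r=(\lambda,g,i)$, Lemma~\ref{l:about_action} gives $\alpha r=(\Lambda_\al(\lambda),g_\al p_{I_\al(1)\lambda}g,i)$ and $\beta r=(\Lambda_\beta(\lambda),g_\beta p_{I_\beta(1)\lambda}g,i)$, so the inequality $\alpha r\neq\beta r$ must come from one of: $\Lambda_\al(\lambda)\neq\Lambda_\beta(\lambda)$ (the first index differs), or the group components $g_\al p_{I_\al(1)\lambda}$ and $g_\beta p_{I_\beta(1)\lambda}$ differ. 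The second index $i$ here is under our control since $r$ is, so by an analogous right-sided analysis we may also assume we know whether $I_\al=I_\beta$ or not.

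Next I would build the term $t(x)$ that separates $\alpha$ from $\beta$ by "padding" $x$ with suitable kernel constants so that the output lands in $\Gamma=\{(1,g,1)\}$ and records the distinguishing data. Concretely, for a fixed $r=(\lambda,g,i)\in K$ consider $t(x)=(\mu,1,1)\,x\,(\lambda,h,1)$; then $t(\al)=(\mu,1,1)\,\alpha r'\cdot(\text{stuff})$—more precisely, using Lemma~\ref{l:properties_of_multiplication} and Lemma~\ref{l:about_action}, one computes $t(\al)\in\Gamma$ and its $G$-component equals $p_{1\Lambda_\al(\mu)}\cdot(g_\al\text{-data})\cdot h$ type expression (depending on $\mu$ and on $\al$'s action), and similarly for $\beta$. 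The non-singularity of $\P$ is exactly what lets us choose $\mu$ so that if $\Lambda_\al(\lambda)\neq\Lambda_\beta(\lambda)$ the column $\mu$ of $\P$ distinguishes these two rows' entries, i.e. $p_{?\Lambda_\al(\lambda)}\neq p_{?\Lambda_\beta(\lambda)}$; dually $\P$ non-singular separates differing second indices via a choice of right-hand constant. When instead the first and second indices agree for all relevant inputs but the group-components differ, that difference is already a difference in $G$ and is transported faithfully into the $\Gamma$-component of $t(\al)$ versus $t(\beta)$ by a straight multiplication, so essentially no work is needed beyond bookkeeping.

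So the structure is: (i) use triviality of $\sim_K$ to get $r\in K$ with $\alpha r\neq\beta r$ (WLOG, else symmetric); (ii) apply Lemma~\ref{l:about_action} to read off in which coordinate — first index, second index, or group element — the outputs differ; (iii) in each of the (at most) three cases exhibit explicit left/right kernel constants to pre- and post-multiply $x$ with, obtaining $t(x)\in\T(S,\Gamma)$, invoking non-singularity of $\P$ precisely in the index-mismatch cases; (iv) verify $t(\al)\neq t(\beta)$ by direct computation with the formulas of Lemmas~\ref{l:properties_of_multiplication} and~\ref{l:about_action}.

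The main obstacle I anticipate is case (iii) in the index-mismatch situation: translating "$\P$ has no two equal rows" into the existence of a \emph{single} index $\mu$ (and a single group constant) such that the composite term value, after all the intervening group multiplications in Lemma~\ref{l:about_action}, genuinely differs — one has to be careful that the $g_\al$, $g_\beta$ factors and the $p_{i\mu}$ factors do not "accidentally" cancel the discrepancy. The clean way around this is to first pin the second index of the input $r$ to $1$ and pre-multiply by $(1,1,1)$, so that by formula~(\ref{eq:relation_for_Lambda1}) and Lemma~\ref{l:about_action}(1) the output's first index is forced to $1$ while its $G$-component becomes a product in which the factor indexed by the mismatched row/column stands alone against a normalized ($1$-valued) counterpart; non-singularity then yields a column where exactly one of the two entries is $\neq 1$. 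One should also separately dispose of the degenerate case $|\Lambda|=|I|=1$, where $K$ is a group, the index data is vacuous, triviality of $\sim_K$ forces the $G$-components to differ, and $t(x)=x$ (composed with a fixed idempotent if $x\notin K$) already works.
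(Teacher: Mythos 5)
Your overall plan coincides with the paper's proof: use the triviality of $\sim_K$ to produce $r=(\lambda,g,i)\in K$ with $\al r\neq\beta r$ (or the symmetric right-handed situation), read off the Rees coordinates of $\al r$ and $\beta r$ via Lemma~\ref{l:about_action}, split into the case where the $G$-components differ and the case where the first indices $\Lambda_\al(\lambda)\neq\Lambda_\beta(\lambda)$ differ, and in the latter case invoke non-singularity of $\P$ to choose the constants of a term of the form $t(x)=(\mathrm{const})\,x\,(\mathrm{const})$ with constants in $K$. Your explicit mention of the right-sided case is, if anything, more careful than the paper, which treats only left translations.

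However, the concrete construction you propose for the index-mismatch case would fail. Your term $t(x)=(\mu,1,1)\,x\,(\lambda,h,1)$ engages the matrix entry $p_{1\nu}$ (the second index of the left constant against the first index of $\al(\lambda,h,1)$), and since $\P$ is normalized, $p_{1\nu}=1$ for every $\nu$; so this term cannot detect whether $\Lambda_\al(\lambda)=\Lambda_\beta(\lambda)$, and moreover its value has first index $\mu$, hence lies in $\Gamma$ only when $\mu=1$. Your proposed repair --- pre-multiplying by $(1,1,1)$ --- has the same defect: $(1,1,1)\al(\lambda,1,1)=(1,\,g_\al p_{i_\al\lambda},\,1)$, where the factor $p_{1\Lambda_\al(\lambda)}=1$ has already erased the very index you need to observe. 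The correct move (and the paper's) is to vary the \emph{second} index of the left constant: taking $t(x)=(1,1,i)x(\lambda,1,1)$ gives $t(\al)=(1,\,p_{i\Lambda_\al(\lambda)}c,\,1)$ and $t(\beta)=(1,\,p_{i\Lambda_\beta(\lambda)}c,\,1)$, where $c=g_\al p_{i_\al\lambda}=g_\beta p_{i_\beta\lambda}$ is a common right factor (if these two products were unequal you would already be done via $t(x)=(1,1,1)x(\lambda,1,1)$). Non-singularity supplies a row $i$ with $p_{i\Lambda_\al(\lambda)}\neq p_{i\Lambda_\beta(\lambda)}$, and right cancellation of $c$ in the group $G$ yields $t(\al)\neq t(\beta)$; this disposes of your worry about ``accidental cancellation'' and also of your incorrect claim that non-singularity produces a column in which exactly one of the two relevant entries is $\neq 1$ --- it only produces a row on which two distinct columns disagree, which is all that is needed.
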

\begin{proof}
Since $\sim_K$ is trivial, there exists an element $(\lambda,g,i)\in K$ with $\al(\lambda,g,i)\neq \beta(\lambda,g,i)$. According to Lemma~\ref{l:about_action}, we have the equalities
\[
\al(\lambda,g,i)=(\Lambda_\al(\lambda),g_\al g,i),
\]
\[
\beta(\lambda,g,i)=(\Lambda_\beta(\lambda),g_\beta g,i).
\]
There are exactly two possibilities.
\begin{enumerate}
\item Let $g_\al\neq g_\beta$. Consider the term $t(x)=(1,1,1)x(\lambda,1,1)\in\T(S,\Gamma)$. We have
\[
t(\al)=(1,1,1)\al(\lambda,1,1)=(1,1,1)(\Lambda_{\al}(\lambda),g_\al,1)=(1,g_\al,1),
\]
\[
t(\beta)=(1,1,1)\beta(\lambda,1,1)=(1,1,1)(\Lambda_{\beta}(\lambda),g_\beta,1)=(1,g_\beta,1),
\]
thus $t(\al)\neq t(\beta)$.
\item Suppose $g_\al=g_\beta=g$ and $\Lambda_\al(\lambda)\neq\Lambda_\beta(\lambda)$. Since the matrix $\P$ is nonsingular, there exists an index $i$ with $p_{i\Lambda_\al(\lambda)}\neq p_{i\Lambda_\beta(\lambda)}$. Consider the term $t(x)=(1,1,i)x(\lambda,1,1)\in\T(S,\Gamma)$. We have
\[
t(\al)=(1,1,i)\al(\lambda,1,1)=(1,1,i)(\Lambda_{\al}(\lambda),g,1)=(1,p_{i\Lambda_\al(\lambda)}g,1),
\]
\[
t(\beta)=(1,1,i)\beta(\lambda,1,1)=(1,1,i)(\Lambda_{\beta}(\lambda),g,1)=(1,p_{i\Lambda_\beta(\lambda)}g,1),
\]
thus $t(\al)\neq t(\beta)$.
\end{enumerate}
\end{proof}

Let $P=(p_1,p_2,\ldots,p_n)\in S^n$. By $\T_P(M,\Gamma)$ (where $P\in M\subseteq S^n$) we denote the set of all terms $t(X)\in\T(S^n,\Gamma)$ such that $t(P)\neq(1,1,1)$, and $t(Q)=(1,1,1)$ for any $Q\in M\setminus\{P\}$.

\begin{lemma}
\label{l:sufficient_conditions_new}
Suppose a finite semigroup $S$ has the kernel $K=(G,\P,\Lambda,I)$, the equivalence relation $\sim_K$ is trivial and the kernel $K$ is an e.d. in the language $\LL_K$. Then for any natural $n$ and an arbitrary point $P=(p_1,p_2,\ldots,p_n)\in S^n$ the set $\T_P(S^n,\Gamma)$ 
\begin{enumerate}
\item is nonempty;
\item for any term $t(X)\in\T_P(S^n,\Gamma)$ and any $g\in G$ it holds $(1,g,1)t(X)(1,g^{-1},1)\in \T_P(S^n,\Gamma)$.
\end{enumerate}
\end{lemma}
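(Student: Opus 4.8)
Part~(2) is a short computation, which I will carry out first. Given $t(X)\in\T_P(S^n,\Gamma)$ and $g\in G$, set $s(X)=(1,g,1)\,t(X)\,(1,g^{-1},1)$. Its constants are those of $t$, which lie in $K$, together with $(1,g,1),(1,g^{-1},1)\in\Gamma\subseteq K$, so the first defining property of $\T(S^n,\Gamma)$ holds. Since $\P$ is normalized, $\Gamma$ is a subgroup with identity $(1,1,1)$ isomorphic to $G$, so for every $R\in S^n$ we may write $t(R)=(1,h,1)$ and then $s(R)=(1,ghg^{-1},1)\in\Gamma$; in particular $s(R)=(1,1,1)$ exactly when $t(R)=(1,1,1)$. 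Reading this at $R=P$ and at $R=Q$ for each $Q\neq P$, and using $t\in\T_P(S^n,\Gamma)$, gives $s(P)\neq(1,1,1)$ and $s(Q)=(1,1,1)$ for all $Q\neq P$, i.e. $s(X)\in\T_P(S^n,\Gamma)$, as required.

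For part~(1) the plan has two steps: separate $P$ from each other point by a single-variable trick, then amalgamate. For the first step, fix $Q\in S^n$ with $Q\neq P$; then $p_k\neq q_k$ for some coordinate $k$. The hypotheses of Lemma~\ref{l:exists_dist_term_new} are met ($\P$ is nonsingular, $\sim_K$ is trivial, and $G$ is an e.d. since $K$ is, by Theorem~\ref{th:main}), so there is $v(x)\in\T(S,\Gamma)$ with $v(p_k)\neq v(q_k)$; regarding $v$ as a term in $X$ via $x:=x_k$ and writing $v(q_k)=(1,c,1)$, the term $w_Q(X):=v(x_k)\,(1,c^{-1},1)$ lies in $\T(S^n,\Gamma)$ and satisfies $w_Q(Q)=(1,1,1)$ and $w_Q(P)\neq(1,1,1)$. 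The goal of the second step is a single term $t(X)\in\T(S^n,\Gamma)$ with the property that $t(R)=(1,1,1)$ if and only if $w_Q(R)=(1,1,1)$ for at least one $Q\neq P$: such a $t$ automatically annihilates every $Q\neq P$ and is nontrivial at $P$, so $t\in\T_P(S^n,\Gamma)$.

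This amalgamation is the substantive part, and it is where the full hypothesis that $K$ is an e.d. (not merely that $G$ is) and the finiteness of $S$ are used. The mechanism is to realize a Boolean ``or'' of the conditions $w_Q(R)=(1,1,1)$. Since $G$ is an e.d. group, the set $\M_{gr}\subseteq G^2$ is algebraic, say $\M_{gr}=\V_G(\{W_j(a,b)=1\}_{j=1}^r)$ with $W_j\in F(a,b)\ast G$; substituting $a\mapsto w_{Q}(X)$ and $b\mapsto w_{Q'}(X)$, replacing each formal inverse $a^{-1}$ by the power $w_{Q}(X)^{e-1}$ with $e=\exp G$ (so that its value on $S^n$ is the $\Gamma$-inverse of $w_Q$), and each group constant $h$ by $(1,h,1)$, converts $\{W_j=1\}_j$ into a finite system over $\LL_S$, with all terms in $\T(S^n,\Gamma)$, whose solution set is $\{R: w_Q(R)=(1,1,1)\ \text{or}\ w_{Q'}(R)=(1,1,1)\}$; iterating over all $Q\neq P$ yields a finite system of $\T(S^n,\Gamma)$-equations with solution set $S^n\setminus\{P\}$. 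The one point that still requires work — and the step I expect to be the main obstacle — is to collapse a finite system $\{t_1(X)=(1,1,1),\dots,t_m(X)=(1,1,1)\}$ with $t_i\in\T(S^n,\Gamma)$ to a single equation $t(X)=(1,1,1)$ with $t\in\T(S^n,\Gamma)$ and the same solutions; this rests again on the e.d. of $K$, equivalently on the fact that over the finite e.d. group $G$ the single point $(1,\dots,1)$ of $G^m$ is cut out by one equation, and is carried out by a further substitution of the $t_i$ into such an equation. Throughout, Lemmas~\ref{l:properties_of_multiplication} and~\ref{l:about_action} supply the explicit products, and Lemma~\ref{l:about_equiv_over_Gamma} is used to keep all constants inside $\Gamma$.
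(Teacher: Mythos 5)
Your part (2) and your separation step (producing, for each $Q\neq P$, a term $w_Q\in\T(S^n,\Gamma)$ with $w_Q(Q)=(1,1,1)$ and $w_Q(P)\neq(1,1,1)$ via Lemma~\ref{l:exists_dist_term_new} and the $t^{|G|-1}$-style inverse) are correct and coincide with the paper's base case. The gap is exactly where you flag it, and it is not a technicality: your amalgamation must turn a finite system $\{t_1(X)=(1,1,1),\dots,t_m(X)=(1,1,1)\}$ with $t_i\in\T(S^n,\Gamma)$ into a \emph{single} term of $\T(S^n,\Gamma)$ with the same zero set, and you justify this by asserting that over a finite e.d.\ group $G$ the singleton $\{(1,\dots,1)\}\subseteq G^m$ is the solution set of one equation. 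That assertion is nowhere available in the paper, is not ``equivalent to the e.d.\ of $K$'' (being an e.d.\ concerns unions of algebraic sets, not the compression of finite systems into single equations), and you give no argument for it; for non-simple e.d.\ groups such as $S_5$, where Maurer--Rhodes polynomial completeness is unavailable, it is far from obvious. Since producing one term per excluded point is precisely what the lemma is for, deferring the difficult step to an unproved statement of essentially the same difficulty leaves the proof incomplete.

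The paper closes this gap with a pointwise rather than uniform device, by induction on the set $M$ of points to be annihilated. Given $t\in\T_P(\{P,Q_2,\dots,Q_m\},\Gamma)$ and $s\in\T_P(\{P,Q_1,Q_3,\dots,Q_m\},\Gamma)$ with $t(P)=(1,g_1,1)$ and $s(P)=(1,g_2,1)$, one first uses property (2) to replace $s$ by a conjugate so that $g_1$ and $g_2$ do not commute; this is possible because otherwise $g_1$ would commute with the entire conjugacy class of $g_2$ and hence be a zero-divisor, contradicting Theorem~\ref{th:zero_divisors} (this is where the hypothesis that $K$, hence $G$, is an e.d.\ is actually used). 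The commutator term $p(X)=t^{-1}(X)s^{-1}(X)t(X)s(X)$ then equals $(1,1,1)$ at every $Q_i$, since at each such point at least one of $t,s$ is already trivial, while $p(P)=(1,[g_1,g_2],1)\neq(1,1,1)$. Note that this does not cut out $\{(1,1)\}\subseteq G^2$ by a single equation; it only arranges the values at the finitely many points that matter, which is all the lemma needs. Replacing your collapse step by this conjugation-and-commutator argument makes your proof go through.
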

\begin{proof}
The second property of the set $\T_P(S^n,\Gamma)$ easily follows from the first one. Indeed, if $t(X)\in\T_P(S^n,\Gamma)$, then
\[
(1,g,1)t(Q)(1,g^{-1},1)=(1,g,1)(1,1,1)(1,g^{-1},1)=(1,g1g^{-1},1)=(1,1,1),
\]
\[
(1,g,1)t(P)(1,g^{-1},1)=(1,g,1)(1,h,1)(1,g^{-1},1)=(1,ghg^{-1},1)\neq(1,1,1),\mbox{ since }h\neq 1.
\]

Let us prove now $\T_P(S^n,\Gamma)\neq\emptyset$.

Below we shall use the denotation:
\[t^{-1}(X)=t^{|G|-1}(X).\]
Obviously, for any $t(X)\in\T(S^n,\Gamma)$ it holds $t^{-1}(X)\in\T(S^n,\Gamma)$, and
\[
t(X)t^{-1}(X)=t^{-1}(X)t(X)=t^{|G|}(X)=(1,1,1)\mbox{ for all }X\in S^n.
\]

We prove $\T_P(M,\Gamma)\neq\emptyset$ by induction on the cardinality of the set $M\subseteq S^n$. Let $M=\{P,Q\}\subseteq S^n$.

Without loss of generality one can assume that the points $P,Q$ have distinct first coordinates $p_1\neq q_1$. By Lemma~\ref{l:exists_dist_term_new}, there exists a term $t(x)\in\T(S,\Gamma)$ with $t(p_1)\neq t(q_1)$. Let $s(X)=t(x_1)t^{-1}(q_1)\in\T(S,\Gamma)$, and we have $s(P)=t(p_1)t^{-1}(q_1)\neq (1,1,1)$, $s(Q)=t(q_1)t^{-1}(q_1)=(1,1,1)$. Thus, $s(X)\in \T_P(M,\Gamma)$.  

Suppose that for any set $M$ with $|M|\leq m$ the statement of the lemma is proved. Let us prove the lemma for a set $M$ with $m+1$ elements.

Let $M=\{P,Q_1,Q_2,\ldots,Q_m\}$. By the assumption of induction, there exist terms
\[t(X)\in\T_P(\{P,Q_2,Q_3,\ldots,Q_m\},\Gamma),s(X)\in\T_P(\{P,Q_1,Q_3,\ldots,Q_m\},\Gamma),\]
with values

\begin{tabular}{ccccccc}
&$P$&$Q_1$&$Q_2$&$Q_3$&$\ldots$&$Q_m$\\
$t(X)$&$(1,g_1,1)$&$(1,h_1,1)$&$(1,1,1)$&$(1,1,1)$&$\ldots$&$(1,1,1)$\\
$s(X)$&$(1,g_2,1)$&$(1,1,1)$&$(1,h_2,1)$&$(1,1,1)$&$\ldots$&$(1,1,1)$
\end{tabular} 

One can choose the elements $g_1,g_2\in G$ which do not commute. Indeed, the second property of the set $\T_P(M,\Gamma)$ allows us to take $g_2$ from the conjugacy class $C=\{gg_2g^{-1}|g\in G\}$. If $g_1$ commutes with all elements of $C$ then $g_1$ is a zero-divisor in the group $G$, and, by Theorem~\ref{th:zero_divisors}, $G$ is not an e.d.   

The values of the term
\[
p(X)=t^{-1}(X)s^{-1}(X)t(X)s(X)\in\T(S,\Gamma),
\]
are

\begin{tabular}{ccccccc}
&$P$&$Q_1$&$Q_2$&$Q_3$&$\ldots$&$Q_m$\\
$p(X)$&$(1,[g_1,g_2],1)$&$(1,1,1)$&$(1,1,1)$&$(1,1,1)$&$\ldots$&$(1,1,1)$
\end{tabular} 

where $[g_1,g_2]=g_1^{-1}g_2^{-1}g_1g_2\neq 1$ is the commutator of  $g_1,g_2$ in the group $G$.
 
Thus, $p(X)\in\T_P(M,\Gamma)$, and we prove the lemma.

\end{proof}

\begin{theorem}
\label{th:criterion}
A semigroup $S$ with the finite kernel $K=(G,\P,\Lambda,I)$ is an e.d. in the language $\LL_S$ iff the next conditions holds:
\begin{enumerate}
\item the kernel $K$ is an e.d. in the language $\LL_K$;
\item the equivalence relation $\sim_K$ is trivial.
\end{enumerate}
\end{theorem}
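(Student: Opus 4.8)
The plan is to prove the two implications separately, with the ``if'' direction carrying essentially all the weight. The ``only if'' direction is pure assembly of earlier results: if $S$ is an e.d. in $\LL_S$, then Theorem~\ref{th:main_new} gives condition (1), that $K$ is an e.d. in $\LL_K$, while Theorem~\ref{th:alpha_sim_beta} applied to the ideal $I=K$ gives condition (2), the triviality of $\sim_K$.

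For the ``if'' direction, assume (1) and (2). The first step I would carry out is a reduction to the finite case: although $S$ is not assumed finite, triviality of $\sim_K$ together with finiteness of $K$ forces $|S|<\infty$. Indeed, by Lemma~\ref{l:about_action} each $\alpha\in S$ produces inner translations of $K$ described by the data $(g_\alpha,\Lambda_\alpha,I_\alpha)$, and there are at most $|G|\,|\Lambda|^{|\Lambda|}\,|I|^{|I|}$ such data; since $\sim_K$ is trivial, distinct elements of $S$ yield distinct data, so $|S|\leq|G|\,|\Lambda|^{|\Lambda|}\,|I|^{|I|}<\infty$. Hence all the hypotheses of Lemma~\ref{l:sufficient_conditions_new} are in force.

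The core step is to show, via Theorem~\ref{th:about_M}, that the set $\M_{sem}\subseteq S^4$ is algebraic over $\LL_S$. Put $B=S^4\setminus\M_{sem}$, a finite set. For each $P\in B$, apply Lemma~\ref{l:sufficient_conditions_new} with $M=S^4$ to obtain a term $t_P(X)\in\T_P(S^4,\Gamma)$; by the definition of $\T_P$ we have $t_P(P)\neq(1,1,1)$ and $t_P(Q)=(1,1,1)$ for every $Q\in S^4\setminus\{P\}$, so the equation $t_P(X)=(1,1,1)$ has solution set exactly $S^4\setminus\{P\}$. Collecting these into the finite system $\Ss(X)=\{\,t_P(X)=(1,1,1)\mid P\in B\,\}$ gives $\V_S(\Ss)=\bigcap_{P\in B}(S^4\setminus\{P\})=S^4\setminus B=\M_{sem}$, so $\M_{sem}$ is algebraic and, by Theorem~\ref{th:about_M}, $S$ is an e.d.

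I do not expect a serious obstacle at the level of the theorem itself: the substantive content is already packaged in Lemma~\ref{l:sufficient_conditions_new} (and behind it Lemmas~\ref{l:exists_dist_term_new},~\ref{l:about_action} and the group input of Theorem~\ref{th:zero_divisors}), so the argument above is mostly bookkeeping. The one point that needs care is the finiteness reduction: the construction of $\Ss$ hinges on taking $M=S^4$ in Lemma~\ref{l:sufficient_conditions_new}, which is legitimate only because $S$ is finite, so deriving $|S|<\infty$ from (1) and (2) must come first. It is also worth recording that condition (1) enters the proof only through Lemma~\ref{l:sufficient_conditions_new} — it is exactly what supplies the non-commuting group elements (and the nonsingularity of $\P$) used there, ultimately via Theorem~\ref{th:zero_divisors} — and that every $t_P$ takes values in $\Gamma\cong G$, so the whole scheme reduces the semigroup question to the group criterion of Theorem~\ref{th:criterion_for_groups}.
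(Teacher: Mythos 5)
Your proposal is correct and follows essentially the same route as the paper: the ``only if'' direction by citing Theorems~\ref{th:main_new} and~\ref{th:alpha_sim_beta}, and the ``if'' direction by first deducing finiteness of $S$ from the triviality of $\sim_K$ (your counting argument via Lemma~\ref{l:about_action} just makes explicit what the paper states in one line) and then using Lemma~\ref{l:sufficient_conditions_new} to build the system $\{t_P(X)=(1,1,1)\mid P\notin\M_{sem}\}$ defining $\M_{sem}$, concluding by Theorem~\ref{th:about_M}. No gaps.
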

\begin{proof}
The ``only if'' statement follows from Theorems~\ref{th:main_new},~\ref{th:alpha_sim_beta}. 

Let us prove the ``if'' statement of the theorem.

Prove the converse. If $S$ is infinite, the equivalence relation $\sim_K$ for a finite $K$ is nontrivial, and we came to the contradiction. Thus, the semigroup $S$ is finite. 

Consider a set $\M_{sem}=\{(x_1,x_2,x_3,x_4)|x_1=x_2\mbox{ or }x_3=x_4\}\subseteq S^4$. By Lemma~\ref{l:sufficient_conditions_new} for any point $P\notin \M_{sem}$ there exists a term $t_P(x_1,x_2,x_3,x_4)\in\T_P(S^4,\Gamma)$. Obviously, the solution set of the system $\Ss=\{t_P(x_1,x_2,x_3,x_4)=(1,1,1)|P\notin \M_{sem}\}$ equals $\M_{sem}$. Thus, the set $\M_{sem}$ is algebraic and by Theorem~\ref{th:about_M} the semigroup $S$ is an e.d.

\end{proof}

\begin{corollary}
Let a semigroup $S$ with the finite kernel $K$ be an e.d. in the language $\LL_S$. Then any nonempty set $M\subseteq S^n$ is defined by a system of the form $\Ss=\{t_i(X)=(1,1,1)|i\in \mathcal{I}\}$, where $t_i(X)\in\T(S^n,\Gamma)$.  
\end{corollary}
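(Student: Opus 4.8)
The plan is to reduce at once to the finite case and then to repeat, almost verbatim, the construction from the ``if'' part of Theorem~\ref{th:criterion}, but with the prescribed set $M$ in place of $\M_{sem}$.

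First I would observe that $S$ is finite. Since $S$ is an e.d.\ with the finite kernel $K$, Theorem~\ref{th:criterion} tells us that $K$ is an e.d.\ in the language $\LL_K$ and that the relation $\sim_K$ is trivial. A finite semigroup $K$ admits only finitely many left and only finitely many right inner translations, hence only finitely many $\sim_K$-classes; if $S$ were infinite, some class would contain two distinct elements and $\sim_K$ would not be trivial, a contradiction. Therefore $S$, and with it $S^n$, is finite, so the complement $S^n\setminus M$ is a finite set.

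Next I would apply Lemma~\ref{l:sufficient_conditions_new}, whose hypotheses ($S$ finite, $\sim_K$ trivial, $K$ an e.d.\ in $\LL_K$) are exactly what was just checked. Hence for every point $P\in S^n\setminus M$ the set $\T_P(S^n,\Gamma)$ is nonempty; fix a term $t_P(X)\in\T_P(S^n,\Gamma)$. By the definition of $\T_P(S^n,\Gamma)$ we have $t_P(P)\neq(1,1,1)$ while $t_P(Q)=(1,1,1)$ for every $Q\in S^n\setminus\{P\}$; equivalently, for a point $X\in S^n$ the equality $t_P(X)=(1,1,1)$ holds if and only if $X\neq P$.

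Finally I would assemble the system $\Ss=\{\,t_P(X)=(1,1,1)\mid P\in S^n\setminus M\,\}$ (with $\Ss=\emptyset$ when $M=S^n$). A point $X\in S^n$ satisfies $\Ss$ precisely when $X\neq P$ for all $P\notin M$, i.e.\ precisely when $X\in M$, so $\V_S(\Ss)=M$. Since each $t_P(X)$ lies in $\T(S^n,\Gamma)$, the system $\Ss$ has the required shape, and the corollary follows. I do not expect a genuine obstacle here: the only point needing care is that the e.d.\ hypothesis together with finiteness of $K$ forces $S$ to be finite, which is what both makes Lemma~\ref{l:sufficient_conditions_new} applicable and turns the index set of $\Ss$ into an honest (finite) set; everything else is a direct repackaging of the proof of Theorem~\ref{th:criterion}.
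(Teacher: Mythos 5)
Your proposal is correct and follows essentially the same route as the paper: establish that $S$ is finite (the paper cites Corollary~\ref{cor:about_infinite_semigroups}, which encapsulates the same translation-counting argument you spell out), then use the necessary conditions from Theorem~\ref{th:criterion} to invoke Lemma~\ref{l:sufficient_conditions_new} and take one equation $t_P(X)=(1,1,1)$ per point $P\in S^n\setminus M$. The only differences are cosmetic (you handle $M=S^n$ explicitly and verify the lemma's hypotheses in more detail).
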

\begin{proof}
By Corollary~\ref{cor:about_infinite_semigroups}, the semigroup $S$ is finite.

Let $S^n\setminus M=\{P_i|i\in\mathcal{I}\}$. Following Lemma~\ref{l:sufficient_conditions_new}, there exist terms $t_i(X)\in\T_{P_i}(S^n,\Gamma)$ such that the solution set of the equation $t_i(X)=(1,1,1)$ equals $S^n\setminus\{P_i\}$. Thus, the solution set of the system $\Ss=\{t_i(X)=(1,1,1)|i\in \mathcal{I}\}$ coincides with $M$.
\end{proof}

The information of the author:

Artem N. Shevlyakov

Omsk Branch of Institute of Mathematics, Siberian Branch of the Russian Academy of Sciences

644099 Russia, Omsk, Pevtsova st. 13

Phone: +7-3812-23-25-51.

e-mail: \texttt{a\_shevl@mail.ru}
\end{document}